\def\kth{Department of Mathematics, KTH Royal Institute of Technology, Stockholm, Sweden}
\title{Stabilizing the singularity swap quadrature for near-singular line integrals}
\author{
David Krantz%
    \thanks{\kth\,
    ({\tt davkra@kth.se}).}
    \and
Alex H.~Barnett%
    \thanks{Center for Computational Mathematics, Flatiron Institute, New York, United States of America\,
    ~({\tt abarnett@flatironinstitute.org}).}
    \and
Anna-Karin Tornberg%
    \thanks{\kth\,
   ({\tt akto@kth.se}).}
}
\date{\today}
\begin{document}

\maketitle

\begin{abstract}\label{s:abstract}Singularity swap quadrature (SSQ) is an effective method for the evaluation at nearby targets of potentials due to densities on curves in three dimensions. While highly accurate in most settings, it is known to suffer from catastrophic cancellation when the kernel exhibits both near-vanishing numerators and strong singularities, as arises with scalar double layer potentials or tensorial kernels in Stokes flow or linear elasticity. This precision loss turns out to be tied to the interpolation basis, namely monomial (for open curves) or Fourier (for closed curves). We introduce a simple yet powerful remedy: target-specific translated monomial and Fourier bases that explicitly incorporate the near-vanishing behavior of the kernel numerator. We combine this with a stable evaluation of the constant term which now dominates the integral, significantly reducing cancellation. We show that our approach achieves close to machine precision for prototype integrals, and up to ten orders of magnitude lower error than standard SSQ at extremely close evaluation distances, without significant additional computational cost.
\end{abstract}

\begin{keywords}
Nearly singular, close evaluation, numerical quadrature, catastrophic cancellation, boundary integral equation, translated basis
\end{keywords}

\section{Introduction}\label{s:introduction}
We consider the numerical evaluation of potentials in three dimensions of the form
\begin{equation}
    u(\xx) = \int_\Gamma \mathcal{K}(\xx,\yy)\sigma(\yy)\ds(\yy),
    \label{eq:layer_potential}
\end{equation}
where $\D s$ is the arc-length measure on a smooth open or closed curve $\Gamma\subset\mathbb{R}^3$, $\sigma$ is a smooth (possibly vector-valued) function defined on $\Gamma$. The kernel $\mathcal{K}(\xx,\yy)$, derived from a fundamental solution of an elliptic partial differential equation (PDE), is singular as $\xx\rightarrow\yy$, with typical form
\begin{equation}
    \mathcal{K}(\xx,\yy) \sim \frac{k(\xx-\yy)}{|\xx-\yy|^m},\quad m=1,3,5,
    \label{eq:K}
\end{equation}
for some smooth (scalar or tensor) function $k(\xx-\yy)$, or a sum of such forms. An open curve (often resulting from decomposition of a closed curve into panels) is often discretized using Gauss--Legendre quadrature, whereas the most common global discretization of a closed curve is the spectrally-accurate periodic trapezoidal rule. Although we focus on such line potentials in three dimensions (3D), the ideas introduced in this paper are also applicable to layer potentials in 2D.

Integrals such as \eqref{eq:layer_potential} arise in boundary or line integral formulations of elliptic PDEs, where the solution is represented via a potential as above, while the unknown ``density'' function $\sigma$ is determined by imposing the boundary conditions and solving the resulting integral equation. Once $\sigma$ is known, the solution at any point in the computational domain is obtained by evaluating the associated potential.

This setting occurs, for instance, in non-local slender-body theory (SBT) for modeling thin filaments in Stokes flow \cite{KellerRubinow1976,Johnson1980,gotz2000}, with applications including suspensions of straight \cite{TORNBERG2006172,Saintillan2016}, flexible \cite{TORNBERG2004}, or closed-loop \cite{ueda} fibers in viscous fluids.
A key remaining challenge is the development of numerical methods that can robustly and efficiently handle near-contact hydrodynamic interactions between filaments \cite{STEIN2024102379}.
Similar line integrals appear via the Biot--Savart law in magnetostatics and vortex dynamics, where they describe magnetic or velocity fields induced by current-carrying wires or vortex filaments, respectively. Their accurate numerical evaluation is a topic of current research \cite{schilling23,polanco25}.
The frequency-domain electromagnetic case is relevant for the evaluation of solution fields due to line-integral approximations to scattering by thin conducting wires \cite{alyones11,haslamwire}.
Finally, accurate quadratures for integrals such as \eqref{eq:layer_potential} are needed as computational subroutines within more general 3D quadrature schemes, such as when surface integrals are reduced to line integrals using Stokes' theorem \cite{jiang2024} or special parameterizations \cite{krantz2024}.

In all such cases, potential evaluations are needed not only to solve an integral equation for the density function $\sigma$, but also to evaluate the solution in the domain. The accuracy of these evaluations depends strongly on the relative position of the evaluation (target) point $\xx$ and the source curve $\Gamma$. One typically distinguishes between four regimes: the target point is either on or off $\Gamma$, and either far or close to the source points $\yy\in\Gamma$. The most challenging case is \textit{close evaluation}, where $\xx\notin\Gamma$ but lies very close to it. Here, the integrand becomes sharply peaked, rendering standard quadrature rules inaccurate or inefficient.

This has led to the development of so-called special quadrature methods, which remain accurate even as the target approaches the curve. In 2D, where surface and line integrals are the same thing, this problem is largely considered ``solved'', with well-established methods such as Helsing--Ojala \cite{HELSING2008,Ojala2015} quadrature, quadrature by expansion (QBX) \cite{KLOCKNER2013332,barnett2014}, and density interpolation \cite{DIMlap19}. 
For line integrals in 3D, the set of available techniques is more limited. In simulations of viscous flow around slender bodies, a common approach has been to employ regularized kernels with standard quadrature rules \cite{cortez,TORNBERG2004}, but this is non-convergent unless the 
regularization parameter is taken arbitrarily small. Per-target adaptive quadrature on $\Gamma$ is in contrast convergent, but becomes expensive with many close targets.

\begin{figure}[t]
\centering
\begin{minipage}[c]{0.25\textwidth}
\begin{subfigure}[t]{\linewidth}
    \centering
    \includegraphics[width=\linewidth]{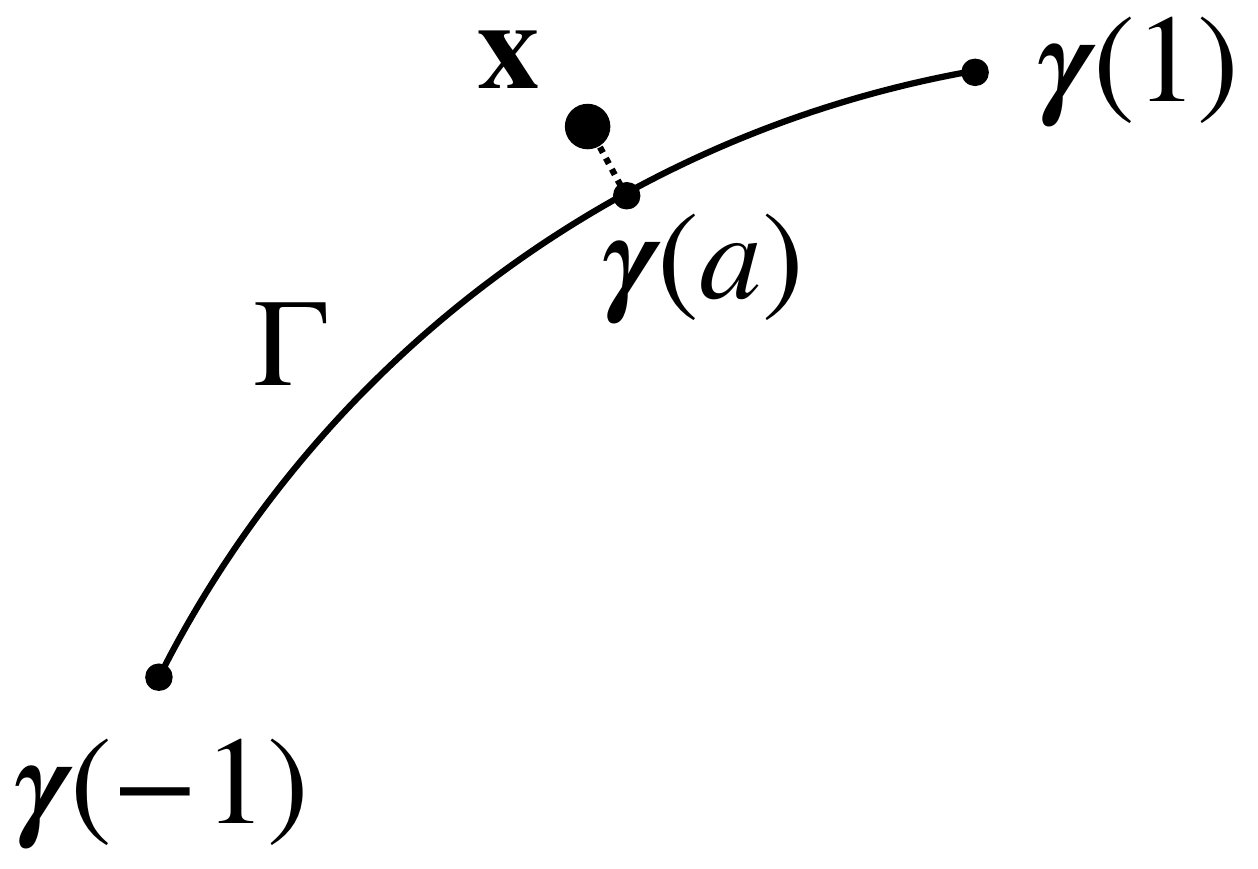}
    \caption{}
\end{subfigure}
\end{minipage}%
\hfill
\begin{minipage}[c]{0.7\textwidth}
    \centering
    \begin{subfigure}[t]{\linewidth}
        \centering
        \includegraphics[width=\linewidth]{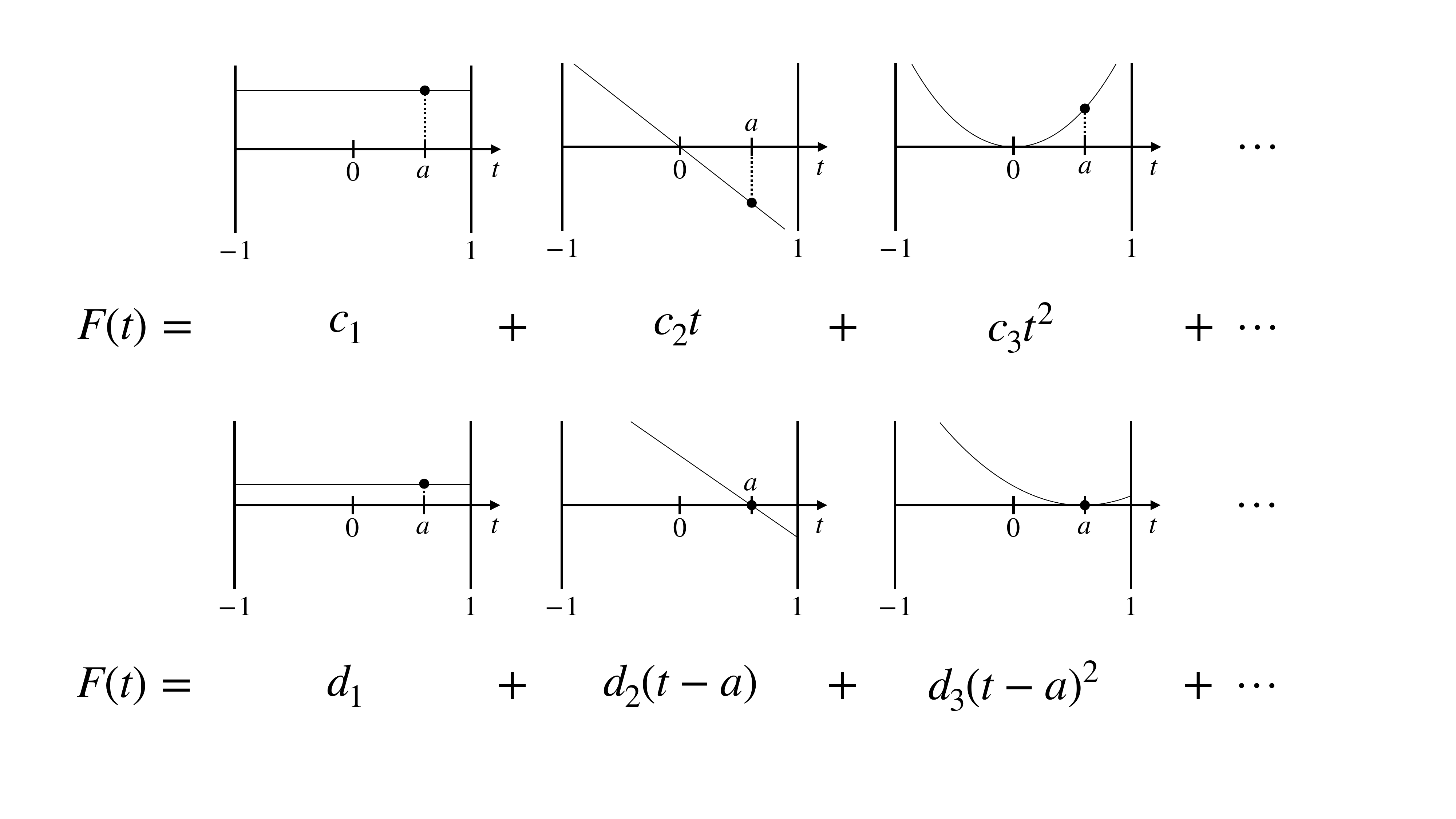}
        \caption{Standard SSQ}
    \end{subfigure}
    \vspace{0.02\linewidth}
    \begin{subfigure}[t]{\linewidth}
        \centering
        \includegraphics[width=\linewidth]{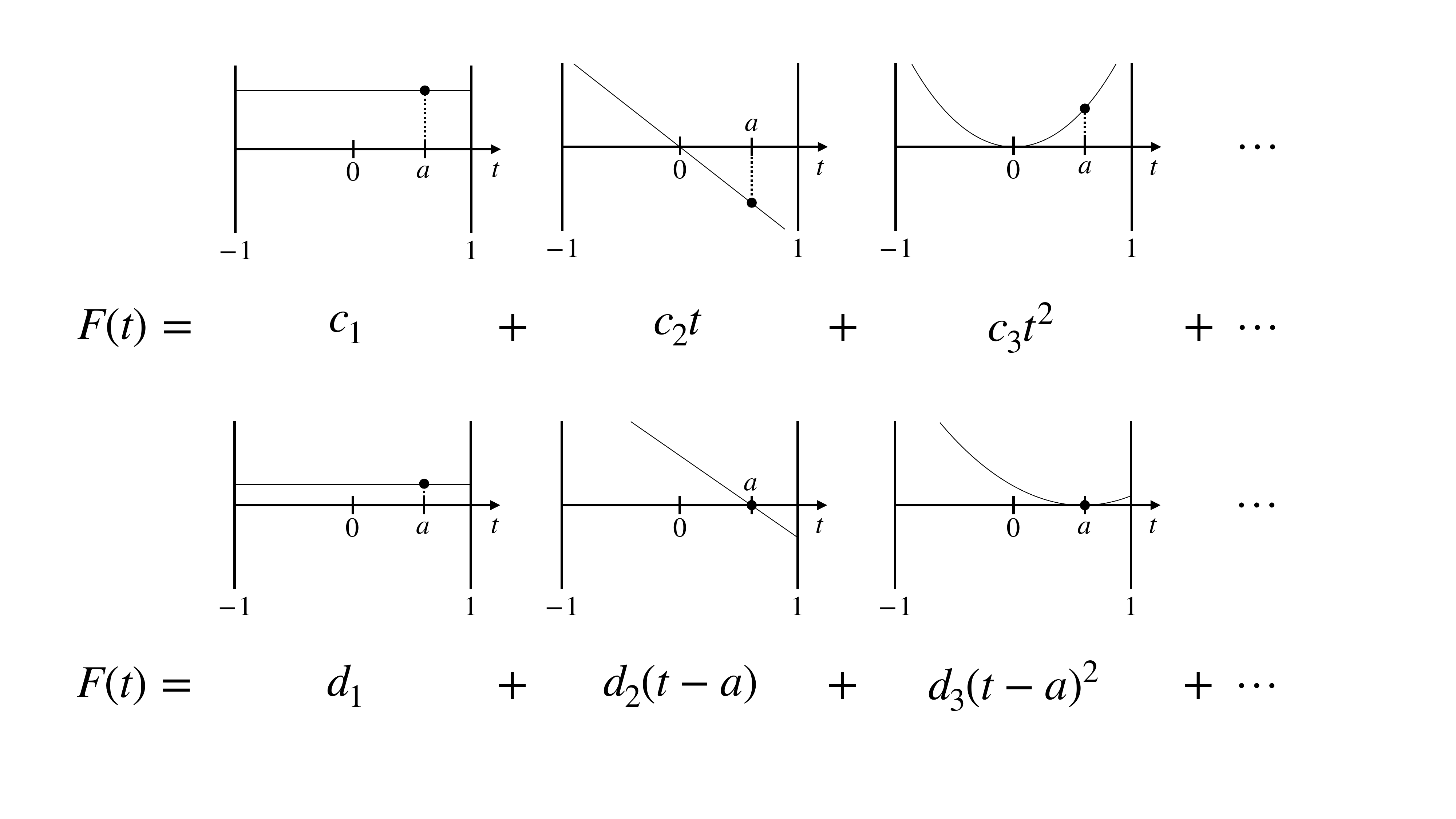}
        \caption{TSSQ}
    \end{subfigure}
\end{minipage}
\caption{Sketch of the principal idea of this paper, in the open-arc case.
(a) shows a line integral along the curve $\Gamma = \ggamma([-1,1]) \subset \RR^3$, with a target point $\xx$ close to the curve point $\ggamma(a)$ ($a$ is the real part of the complex root $t_0$ introduced in \eqref{eq:root_def}).
(b) shows the standard SSQ expansion of the integrand numerator $F(t)$ in the monomial basis: the small value $F(a)$---which dominates the near-singular integral---is given by the sum of larger cancelling terms, making the method unstable.
(c) shows the proposed expansion in a {\em translated} monomial basis,
which we call TSSQ (translated SSQ): now $F(a)$ is captured directly by its small
constant term, computed {\em stably} from the density, while the remaining
terms each contribute much less to the line integral. Catastrophic
cancellation is thereby avoided. \label{fig:idea}}
\end{figure}

A recent alternative is the \textit{singularity swap quadrature} (SSQ), introduced in \cite{AFKLINTEBERG2021}. The method ``swaps'' the near-singular factor for a simpler one having the same singular behavior in the complexified parameter plane. The smooth part of the integrand is interpolated in a fixed basis (typically monomials or Fourier modes), and the resulting \textit{basis integrals}, involving the basis functions multiplied with the simpler nearly singular factor, are evaluated analytically, usually through recurrence relations. This results in a convergent scheme that needs no per-target adaptivity. Originally developed for open curves in two and three dimensions, discretized with panel-based Gauss--Legendre quadrature, SSQ has since been extended to closed curves in two \cite{afKlinteberg2024} and three dimensions \cite{krantz2024}, discretized using the trapezoidal rule. Additionally, \cite{bao2024} introduced a variant of SSQ for closed curves in 2D.

While SSQ has proven highly accurate in many settings, it suffers from \textit{catastrophic cancellation} when the kernel numerator has a near-vanishing factor, as occurs in double layer kernels containing $\nn\cdot(\xx-\yy)$ (where $\nn$ is a normal vector) and tensor-valued kernels (Stokes, elasticity, etc.) containing, for example,  $(\xx-\yy)(\xx-\yy)^T$. In such cases, the true value of the integral is small, while the quadrature sum consists of large, oscillatory terms, leading to significant round-off error. This effect, noted in \cite[Remark 11]{AFKLINTEBERG2021}, \cite[Remark 6.1]{krantz2024}, and \cite[Section 3.1]{bao2024}, becomes more severe with increasing singularity strength and is not resolved by singularity subtraction techniques, as proposed in \cite{bao2024}.

This paper introduces a simple and effective remedy. The core issue is that standard SSQ uses a fixed, generic interpolation basis that does not account for the local vanishing behavior of the numerator of the integrand. We propose instead to adapt the basis to the target point. When the numerator vanishes near a point, so should all but one of the basis functions that interpolate it. For open curves, this leads to monomials expanded about the near-vanishing point, as sketched in Figure~\ref{fig:idea}. For periodic curves, we construct a translated Fourier basis with the same localization.
In both settings, the result is a quadrature whose terms are appropriately scaled, and cancellation is prevented. A key ingredient is the stable recovery of 
the coefficient of the one nonvanishing basis function (the constant function in our settings).

\begin{remark}
Target-adapted monomial bases also play a key role in the corrected trapezoidal rules developed by Nitsche \cite{nitsche_evaluation_2021,nitsche_corrected_2022} for line integrals in 2D, and recently generalized to surface integrals in 3D \cite{nitsche2025correctedtrapezoidalrulesnearsingular}.
They use a double expansion over both shifted monomials and singularity powers, with each term integrated either in closed form or by recurrence relations, and the achieved order of accuracy controlled by the number of retained terms.
Near-singular contributions are then \textit{subtracted}
from the integrand to give fixed-order trapezoid corrections.
In contrast, in our approach the integrand is \textit{divided by} one near-singular function with the correct singularity location (as in SSQ), leaving only a \textit{single} shifted monomial or Fourier expansion to fully approximate the resulting smooth function.
\end{remark}

The outline of this paper is as follows. In Section \ref{s:ssq}, we review the SSQ method and highlight the origin of its catastrophic cancellation. Section \ref{s:modified} introduces our modified interpolation bases, including their new recurrence formulae and properties. Section \ref{s:numerical_experiments} presents numerical experiments demonstrating the improved accuracy when using the modified bases in close evaluation problems. We conclude in Section \ref{s:conclusion},
and a short appendix contains elliptic integrals needed for the Fourier recurrences.

\section{Problem setup and overview of singularity swap quadrature}\label{s:ssq}
We begin by introducing the general class of integrals that we aim to evaluate accurately in the remainder of the paper. We also summarize the key components of the \textit{singularity swap quadrature} (SSQ) method, following the approach introduced in \cite{AFKLINTEBERG2021}. The method is first presented in a general setting, after which we specify the adaptations required for open and closed curves.

Let the analytic curve $\Gamma$ in \eqref{eq:layer_potential} be parametrized by the analytic map $\ggamma:E\rightarrow\mathbb{R}^3$, where the parameter domain $E\subset\mathbb{R}$ is typically chosen as either $[-1,1]$ for open curves, or $[0,2\pi)$ for closed (periodic) curves. The parametrized form of the line potential \eqref{eq:layer_potential} with a general power-law singularity \eqref{eq:K} is
\begin{equation}
I_m = I_m(\xx) = \int_E \frac{\ftilde(t,\xx)\sigmatilde(t)}{|\ggamma(t)-\xx|^m}\dt,\quad m=1,3,5,
\label{eq:Imparam}
\end{equation}
where $\sigmatilde(t)\coloneqq\sigma(\ggamma(t))$ is the pullback of the density function, while $\ftilde(t,\xx)$ includes parametrization and smooth kernel factors
that can be evaluated at any $t\in E$. This is accurately approximated using a standard quadrature rule with nodes $\{t_j\}_{j=1}^n$ when the target point $\xx$ is sufficiently far from the curve.

To develop a robust quadrature for close evaluation, we introduce the squared-distance function between the target point $\xx=(x_1,x_2,x_3)$ and the curve $\Gamma = \{\ggamma(t)~\vert~t\in E\}$, where $\ggamma(t) = (\gamma_1(t),\gamma_2(t),\gamma_3(t))$,
\begin{equation}
    R(t)^2 \coloneqq \left|\ggamma(t)-\xx\right|^2 = \left(\gamma_1(t)-x_1\right)^2+\left(\gamma_2(t)-x_2\right)^2+\left(\gamma_3(t)-x_3\right)^2.
    \label{eq:R2}
\end{equation}
Using this, we rewrite the integral as
\begin{equation}
    I_m = \int_E \frac{\ftilde(t,\xx)\sigmatilde(t)}{\left(R(t)^2\right)^{m/2}}\dt,
    \label{eq:Im}
\end{equation}
The key idea in \cite{AFKLINTEBERG2021} is to transform the integral \eqref{eq:Im} over a curve in $\mathbb{R}^3$ into an integral with a known singularity structure on a simple domain in $\mathbb{C}$. This is accomplished by analytically continuing $R(t)^2$ from $E$ to a (usually large) open neighborhood in the complex $t$ plane\footnote{When $R(t)^2$ is evaluated with complex arguments, it is computed according to the right-most expression in \eqref{eq:R2}.} and identifying a pair of complex conjugate roots $t_0(\xx)$, $\overline{t_0(\xx)}$ such that
\begin{equation}
    R(t_0(\xx))^2=R(\overline{t_0(\xx)})^2=0.
    \label{eq:root_def}
\end{equation}
We refer the reader to \cite[Figure~5]{AFKLINTEBERG2021} for a visualization of these root pairs and the corresponding target point $\xx$.
We assume that these roots lie in the domain of analyticity of $\ggamma$ and hence of $R^2$ (and this explains why we write expressions using $R^2$ and not $R$, following \cite{AFKLINTEBERG2021}).
From now we write simply $t_0$, leaving the target-dependence implied.
In practice, given $\ggamma(t)$ and $\xx$, $t_0$ is easily found through a numerical complex-plane root-finding procedure, to which we refer the reader to \cite[Section~3.2]{AFKLINTEBERG2021} and \cite[Section~2.1]{afKlinteberg2024} for details. 

This yields the reformulation
\begin{equation}
    I_m = \int_E \frac{F(t)}{h(t,t_0)^m}\dt,
    \label{eq:Imswap}
\end{equation}
where the ``SSQ numerator'' $F(t)$ is defined by
\begin{equation}
    F(t) \coloneqq \ftilde(t,\xx)\sigmatilde(t)\frac{h(t,t_0)^m}{\left(R(t)^2\right)^{m/2}},
    \label{eq:F}
\end{equation}
and the function $h(t,t_0)$ is chosen such that $h(t,t_0)^2$ matches the root structure of $R(t)^2$, so that $F(t)$ becomes smooth on $E$ (in fact analytic in the large open neighborhood of $E$).\footnote{Alternative swapping strategies, such as the subtraction method used in \cite{bao2024}, also exist.}

To evaluate $I_m$, one interpolates the SSQ numerator $F(t)$ using some fixed basis $\{\varphi_k\}_{k=1}^n$, so that
\begin{equation}
F(t) \approx \sum_{k=1}^n c_k(t_0)\varphi_k(t),\quad t\in E,
\label{eq:Fapprox}
\end{equation}
with coefficients $c_k(t_0)$ computed from numerator samples $F_j\coloneqq F(t_j)$.
This yields the interpolatory quadrature formula
\begin{equation}
    I_m \approx \sum_{k=1}^n c_k(t_0)\underbrace{\int_E\frac{\varphi_k(t)}{h(t,t_0)^m}\dt}_{\eqqcolon B_k^m(t_0)}
    \; =: \; \mathbf{c}^T\mathbf{B}^m,
    \label{eq:Imapprox}
\end{equation}
where $\mathbf{c}$ and $\mathbf{B}^m$ denote the column vectors with entries $\{c_k\}$ and $\{B_k^m\}$, respectively. We refer to $B_k^m$ as \textit{basis integrals}---these are typically evaluated analytically using recurrence relations---while the associated vector $\mathbf{c}\odot\mathbf{B}^m$, where $\odot$ denotes the Hadamard (elementwise) product, is the quadrature vector.

\begin{table}[t]
\centering
\caption{SSQ nodes, interpolation bases, and singularity-cancelling functions whose squares match the root structure of $R(t)^2$ in \eqref{eq:R2}, for the two types of curves considered.}
\label{tab:ssq_summary}
\begin{tabular}{|c||l|l|l|}
\hline
Curve, $\Gamma$                                            & \multicolumn{1}{c|}{Discretization nodes, $\{t_j\}_{j=1}^{n}\in E$} & \multicolumn{1}{c|}{Basis functions, $\varphi_k$} & \multicolumn{1}{c|}{\begin{tabular}[c]{@{}c@{}}Function\\$h(t,t_0)$\end{tabular}} \\ \hline\hline
\begin{tabular}[c]{@{}c@{}}Open\\(non-periodic) \cite{AFKLINTEBERG2021}\end{tabular}                                                      & Gauss--Legendre, $E=[-1,1]$                           & Real monomial, $t^{k-1},~k=1,\dots,n$                          & $|t-t_0|$                                                                                           \\ \hline
\begin{tabular}[c]{@{}c@{}}Closed\\ (periodic) \cite{krantz2024}\end{tabular} & Periodic trapezoidal, $E=[0,2\pi)$                             & Fourier, $e^{ikt},~
-n/2\le k<n/2$
& $|e^{it}-e^{it_0}|$                                                                                 \\ \hline
\end{tabular}
\end{table}

To make this framework concrete, we must specify the discretization nodes $\{t_j\}$, basis functions $\{\varphi_k\}$, and the form of the function $h(t,t_0)$. Following \cite{AFKLINTEBERG2021,afKlinteberg2024,krantz2024}, we adopt the choices summarized in Table~\ref{tab:ssq_summary}.
For the real monomial basis, the coefficients $c_k$ in \eqref{eq:Fapprox} are obtained by solving a Vandermonde system via the Björck--Pereyra algorithm \cite{bjorck} or using a precomputed LU factorization, both with computational cost $\mathcal{O}(n^2)$. For the Fourier basis, the coefficients, corresponding to the Fourier coefficients of $F(t)$ in \eqref{eq:F}, are computed using the FFT at an $\mathcal{O}(n\log n)$ cost.
Examining the table, the basis integrals $B_k^m(t_0)$ are as follows:
\begin{itemize}
    \item For open curves (real monomial basis),
    \begin{equation}
    P_k^m(t_0) = \int_{-1}^{1} \frac{t^{k-1}}{|t-t_0|^m}\dt,\quad k=1,2,\dots,n;
    \label{eq:monomial_integral}
    \end{equation}
    \item For closed curves (Fourier basis),
    \begin{equation}
    S_k^m(t_0) = \int_{0}^{2\pi} \frac{e^{ikt}}{\left|e^{it}-e^{it_0}\right|^m}\dt,\quad k\in\mathbb{Z},
    \label{eq:fourier_integral}
    \end{equation}
\end{itemize}
both of which admit accurate and efficient evaluation via recurrence formulas, for each power $m=1,3,5$. For the monomial integrals \eqref{eq:monomial_integral}, recurrence formulas are given in \cite[Appendix B]{TORNBERG2006172}, with improved stabilization for the initial values $k=1$ presented in \cite[Section 3.1]{AFKLINTEBERG2021}. For the Fourier integrals \eqref{eq:fourier_integral}, recurrences are provided in \cite[Lemma 2.3]{krantz2024}, along with a stabilization technique described in \cite[Appendix B]{krantz2024}.
We note that the basis integrals in \eqref{eq:monomial_integral}-\eqref{eq:fourier_integral}, as well as the modified basis integrals introduced in Section \ref{s:modified}, admit closed-form expressions. However, we choose to evaluate them using recurrence relations, as this approach is computationally more efficient.

In exact arithmetic, the combination of the interpolatory quadrature \eqref{eq:Imapprox} and recurrence relations for the basis integrals yields a formula for $I_m$ \eqref{eq:Imparam} whose accuracy is governed by the interpolation error of $F(t)$ in \eqref{eq:Fapprox}. 
In floating-point arithmetic, however, \textit{catastrophic cancellation} may occur in \eqref{eq:Imapprox} when $F(t)$ is near-vanishing for $\ggamma(t)$ close to the target point $\xx$.
This situation typically arises when $k(\xx-\yy)$ contains one or more factors $(\xx-\yy)$, so that $F(t)$ has a root near the real parameter $t = a\coloneqq\Re(t_0)$. The observed cancellation has two coupled causes.

First, if $F$ has one or more roots near $t=a$, then representing $F(t)$ in the basis $\{\varphi_k\}$, whose elements are generally of size $\mathcal{O}(1)$ near $t=a$, forces the small value of $F(t)$ for $t\approx a$ to be produced by cancellation among $\mathcal{O}(1)$ contributions.
To illustrate, suppose that $F(t)\sim(t-a)^3$ to leading order.
Evaluating this factor via its expanded polynomial form in the left-hand side of the following equation,
\begin{equation}
t^3-3t^2a+3ta^2-a^3 = (t-a)^3,
\label{eq:poly}
\end{equation}
incurs severe loss of significance for $t\approx a$, whereas evaluating it in its factored form in the right-hand side does not \cite[Section 1.4.3]{van_loan}.

Second, the denominator in \eqref{eq:Imapprox} concentrates the contribution to the integral $I_m$ around $t\approx a$.
Consequently, the terms $c_k P^m_k$ or $c_k S^m_k$ are typically much larger in magnitude than $I_m$, so that the final value is obtained only through strong cancellation in the quadrature sum.
Equivalently: although the interpolant of $F(t)$ may have small \textit{absolute} error throughout the domain $E$,
its \textit{relative} error around $t\approx a$ is arbitrarily large because
$F$ is near-vanishing there, and since this region dominates the integral, it controls the overall error in $I_m$.\footnote{A simple analogy is trying to numerically integrate the smooth function $(\sin x)/(\pi-x)$ by replacing $\sin x$ by its Taylor series about $x=0$ then integrating term by term. Each term involves blow-up at $x=\pi$ even though the true function does not.}

In the next section, we introduce alternative interpolation bases, centered at $a$, that significantly improve the numerical stability of SSQ and reduce cancellation error to a negligible level.
In practice, we decide between the standard and modified bases using a simple criterion based on the location of $t_0$ relative to the integration interval $E$, as described in Section \ref{ss:when_to_use_mod_basis}.
Nevertheless, the approximate cancellation error introduced in the following remark remains a useful tool for analyzing and illustrating the underlying cancellation behavior.

\begin{remark}[Numerical cancellation error]\label{rem:cancellation_errest}Loss of accuracy in a nearly cancelling sum can be understood via the condition number of the summation operation. Writing the sum as the dot product $S=\mathbf{a}^T\mathbf{b}$ with the real-valued column vectors $\mathbf{a}$ and $\mathbf{b}$, the condition number of $S$ with respect to perturbations in $\mathbf{a}$ or $\mathbf{b}$ is
\begin{equation}
    \kappa_{\textrm{dot}}(\mathbf{a},\mathbf{b}) = \frac{\|\mathbf{a}\|\|\mathbf{b}\|}{|\mathbf{a}^T\mathbf{b}|},
\end{equation}
where, for example, we may choose the maximum norm. In finite precision arithmetic, the relative error in $S$ due to sensitivity with respect to the input vectors can then be approximated as
\begin{equation}
E_{\textrm{cancel}}=\kappa_{\textrm{dot}}(\mathbf{a},\mathbf{b})\epsilon_{\textrm{mach}},
\end{equation}
where $\epsilon_{\textrm{mach}}$ denotes the double precision machine rounding error. This means that large errors arise when $\mathbf{a}$ and $\mathbf{b}$ are nearly orthogonal.
\end{remark}

\section{Modified interpolation bases}\label{s:modified}
We now introduce translated interpolation bases for the function $F(t)$ in \eqref{eq:F}, designed to reduce numerical cancellation in the quadrature evaluation \eqref{eq:Imapprox} when the target point is close to the curve $\Gamma$, and the kernel $\mathcal{K}$ in \eqref{eq:layer_potential} involves near-vanishing numerator factors.
The \textit{modified basis} should satisfy the following design principles:
\begin{enumerate}
    \item The quadrature sum is dominated by at most one mode in the basis expansion.
    \item All non-constant basis functions should vanish at $a$, the real argument where $F(t)$ is near-vanishing.
    \item The basis spans the same space as the standard one and retains a comparable approximation power.
\end{enumerate}
Elaborating on the 2nd design principle, the rate (power) of vanishing should
ideally match that of $F(t)$. However,
as we will see in Remark \ref{rem:modified_fourier_coeff}, an exact match is not always necessary for the method to be effective in practice.

We will also later highlight the importance of computing the coefficient associated with the constant basis function 
(the ``one mode'' in the 1st design principle) 
with high accuracy. As we will see, inaccuracies in this coefficient can severely limit the overall precision, even when using a well-designed basis.
In addition, we introduce a stable construction of target-specific quadrature weights, designed specifically for the modified bases.

We now turn to the construction of such modified bases, beginning with the open curve case, followed by the periodic setting. Recall that $F(t)$ is defined after ``swapping'' the singularity in the integrand \eqref{eq:Imparam}, and depends implicitly on the complex-valued root $t_0=a+ib$ (with $a,b\in\mathbb{R}$, $t_0\notin[-1,1]$, or $e^{it_0}$ not on the unit circle) of the squared distance function $R(t)^2$ in \eqref{eq:R2}.

\subsection{Open curves: translated monomial basis}\label{ss:open_curves}
For open curves, the modification may be a simple translation of the origin.

\begin{newdef}[Translated monomial basis]\label{def:modified_monomial_basis}Let $a\in\mathbb{R}$ be a shift parameter. For a given integer $n>0$, the translated (or modified) monomial basis of order $n$ centered at $a$, is defined as
\begin{equation}
    \left\{(t-a)^{k-1}\right\}_{k=1}^{n}.
\end{equation}
\end{newdef}

Expanding $F(t)$ in this basis, we obtain the approximation
\begin{equation}
    F(t) \approx \sum_{k=1}^n d_k(t_0)(t-a)^{k-1},\quad t\in[-1,1],
    \label{eq:Fexp_shifted_monomial}
\end{equation}
where the coefficients $d_k(t_0)$, $k>1$, are determined by solving a Vandermonde system based on the samples $\{F_j\}$, with computational cost $\mathcal{O}(n^2)$. This leads to the modified quadrature approximation
\begin{equation}
    I_m \approx I_m^{\textrm{mod}} = \sum_{k=1}^nd_k(t_0)\widetilde{P}_k^m(t_0) = \mathbf{d}^T\mathbf{\widetilde{P}}^m,
    \label{eq:Imapprox_shifted_monomial}
\end{equation}
where we now define the basis integrals $\widetilde{P}_k^m(t_0)$ and show their
computation via modified recurrence relations.

\begin{lemma}[Modified monomial basis integrals]\label{lem:mod_monomial_rec}Let $k\in\mathbb{N}^{+}$, $m\in\{2n+1:n\in\mathbb{N}\}$, $t_0=a+ib$, with $a,b\in\mathbb{R}$ and $t_0\notin[-1,1]$. Then the modified basis integrals
\begin{equation}
    \widetilde{P}_k^m(t_0) := \int_{-1}^{1} \frac{(t-a)^{k-1}}{|t-t_0|^m}\dt
    \label{eq:Pkm}
\end{equation}
can be computed via the recurrences
\begin{equation}
    \widetilde{P}_{k+1}^m(t_0) =
    \begin{dcases}
        \dfrac{t_2^{k-1}u_2-t_1^{k-1}u_1-(k-1)b^2\widetilde{P}_{k-1}^m(t_0)}{k},& m=1\text{ and } k>1,\\
        \widetilde{P}_{k-1}^{m-2}(t_0)-b^2\widetilde{P}_{k-1}^{m}(t_0), & m>1\text{ and } k>1,
    \end{dcases}
    \label{eq:pkm_rec}
\end{equation}
where $t_1=-1-a$, $t_2=1-a$, and
\begin{equation}
    u_1 = \sqrt{(1+a)^2+b^2} = |1+t_0|,\quad u_2=\sqrt{(1-a)^2+b^2}=|1-t_0|.
\end{equation}
Initial values for $m=1,3,5$ are given by
\begin{align}
&\widetilde{P}_1^1(t_0) = \asinh\left(\frac{t_2}{|b|}\right)- \asinh\left(\frac{t_1}{|b|}\right), &&\widetilde{P}_2^1(t_0) = u_2-u_1, \label{eq:pk1_initial} \\
&\widetilde{P}_1^3(t_0) = \frac{1}{b^2}\left(\frac{t_2}{u_2}-\frac{t_1}{u_1}\right), &&\widetilde{P}_2^3(t_0) = \frac{1}{u_1}-\frac{1}{u_2}, \label{eq:pk3_initial} \\
&\widetilde{P}_1^5(t_0) = \frac{1}{3b^2}\left(\frac{t_2}{u_2^3}-\frac{t_1}{u_1^3}+2\widetilde{P}_1^3(t_0)\right), &&\widetilde{P}_2^5(t_0) = \frac{1}{3}\left(\frac{1}{u_1^3}-\frac{1}{u_2^3}\right). \label{eq:pk5_initial}
\end{align}
\end{lemma}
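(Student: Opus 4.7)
The plan is to reduce everything to the substitution $s := t - a$, under which $|t - t_0|^2 = (t-a)^2 + b^2 = s^2 + b^2$, the limits transform to $[t_1, t_2]$, and
\[
\widetilde{P}_k^m(t_0) \;=\; \int_{t_1}^{t_2} \frac{s^{k-1}}{(s^2+b^2)^{m/2}}\,ds.
\]
The condition $t_0 \notin [-1,1]$ (equivalently $b \neq 0$ or $a \notin [-1,1]$) keeps $s^2 + b^2$ bounded away from zero on the interval, so all integrands are smooth. This reduction converts the claim into standard one-variable calculus.

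For the $m > 1$ recurrence, I would use the purely algebraic identity $s^k = s^{k-2}(s^2+b^2) - b^2 s^{k-2}$, divide by $(s^2+b^2)^{m/2}$, and integrate termwise, which yields
\[
\widetilde{P}_{k+1}^m(t_0) \;=\; \widetilde{P}_{k-1}^{m-2}(t_0) - b^2\,\widetilde{P}_{k-1}^m(t_0).
\]
For the $m=1$ recurrence, I would apply integration by parts to $\int_{t_1}^{t_2} s^{k-1}\cdot \frac{s}{\sqrt{s^2+b^2}}\,ds$ with $v(s) = \sqrt{s^2+b^2}$, giving the boundary contribution $t_2^{k-1}u_2 - t_1^{k-1}u_1$ and an interior term $-(k-1)\int s^{k-2}\sqrt{s^2+b^2}\,ds$; rewriting $\sqrt{s^2+b^2} = (s^2+b^2)/\sqrt{s^2+b^2}$ expresses this interior integral as $(k-1)(\widetilde{P}_{k+1}^1 + b^2\widetilde{P}_{k-1}^1)$, after which one solves for $\widetilde{P}_{k+1}^1$ to obtain the stated formula.

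The six initial values reduce to explicit antiderivatives. For $\widetilde{P}_1^1$ and $\widetilde{P}_2^1$ use $\log(s+\sqrt{s^2+b^2})$ and $\sqrt{s^2+b^2}$ respectively; for $\widetilde{P}_1^3$ and $\widetilde{P}_2^3$ use that $\frac{d}{ds}\bigl[s/\sqrt{s^2+b^2}\bigr] = b^2/(s^2+b^2)^{3/2}$ and $-1/\sqrt{s^2+b^2}$ is an antiderivative of $s/(s^2+b^2)^{3/2}$; and $\widetilde{P}_2^5$ follows immediately from $\frac{d}{ds}[-\tfrac13(s^2+b^2)^{-3/2}] = s/(s^2+b^2)^{5/2}$.

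The only mildly nontrivial step is $\widetilde{P}_1^5$, which I would derive by computing
\[
\tfrac{d}{ds}\!\left[\tfrac{s}{(s^2+b^2)^{3/2}}\right] \;=\; \tfrac{b^2 - 2s^2}{(s^2+b^2)^{5/2}},
\]
then splitting $-2s^2 = -2(s^2+b^2) + 2b^2$ to rewrite the right-hand side as $\tfrac{3b^2}{(s^2+b^2)^{5/2}} - \tfrac{2}{(s^2+b^2)^{3/2}}$. Integrating over $[t_1,t_2]$ and solving for $\widetilde{P}_1^5$ gives the stated expression in terms of $\widetilde{P}_1^3$. None of these steps presents a real obstacle; the content of the lemma is essentially that the translated-basis integrals satisfy the same structural recurrences as the untranslated ones in \cite{TORNBERG2006172, AFKLINTEBERG2021} once written in the variable $s = t-a$.
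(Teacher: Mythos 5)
Your proof is correct and follows essentially the same route as the paper: reduce via $s = t - a$, then apply elementary one-variable calculus. Your $m > 1$ argument (multiply by $(s^2+b^2)/(s^2+b^2)$ and split) is exactly the paper's identity $(t-a)^2/|t-t_0|^3 = 1/|t-t_0| - b^2/|t-t_0|^3$, written once for all odd $m$. For $m=1$ your integration-by-parts with $v=\sqrt{s^2+b^2}$ is a slightly more direct derivation than the paper's, which first establishes a link between $\widetilde{P}_k^1$ and $\widetilde{P}_{k+2}^3$ by integrating $\tfrac{d}{ds}\bigl(s^k/\sqrt{s^2+b^2}\bigr)$ and then eliminates the $m=3$ quantities via $\widetilde{P}_{k+2}^1 = \widetilde{P}_{k+4}^3 + b^2\widetilde{P}_{k+2}^3$; the two are algebraically equivalent. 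Your explicit verification of the initial values, including the $\widetilde{P}_1^5$ derivation by differentiating $s/(s^2+b^2)^{3/2}$, is sound and fills in a step the paper leaves implicit.
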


\begin{proof}
Consider the case $m=1$. Let $s=t-a$, so that $\widetilde{P}_k^1(t_0) = \int_{-1-a}^{1-a}\frac{s^{k-1}}{\sqrt{s^2+b^2}}\ds$. Integrate both sides of the identity $\frac{\D}{\D s}(s^k/\sqrt{s^2+b^2}) = ks^{k-1}/\sqrt{s^2+b^2} - s^{k+2}/(s^2+b^2)^{3/2}$ over $s\in[-1-a,1-a]$, which yields
\begin{equation}
\widetilde{P}_{k+2}^3(t_0) = k\widetilde{P}_k^1(t_0) - \left[\frac{s^k}{\sqrt{s^2+b^2}}\right]_{-1-a}^{1-a}.
\label{eq:proof1}
\end{equation}
Also note that
\begin{equation}
\widetilde{P}_{k+2}^1(t_0)=\widetilde{P}_{k+4}^3(t_0)+b^2\widetilde{P}_{k+2}^3(t_0).
\label{eq:proof2}
\end{equation}
Insert \eqref{eq:proof1} into the two terms in the right-hand side of \eqref{eq:proof2}, solve for $\widetilde{P}_{k+2}^1(t_0)$ and shift the index $k$ to get the recurrence formula in \eqref{eq:pkm_rec}.

For $m=3$, it holds that $(t-a)^2/|t-t_0|^3 = 1/|t-t_0| - b^2/|t-t_0|^3$. Multiply both sides with $(t-a)^{k-1}$ and integrate both sides over $[-1,1]$, which gives the recurrence formula in \eqref{eq:pkm_rec}. 
The formulas for odd $m>3$ are derived analogously, and the corresponding initial values can be obtained in closed form from the defining integrals.
\end{proof}

\begin{remark}
The expressions for $\widetilde{P}_1^3(t_0)$ and $\widetilde{P}_1^5(t_0)$ in \eqref{eq:pk3_initial} and \eqref{eq:pk5_initial} may become inaccurate when $t_0$ lies within the cones extending outwards from the endpoints $\pm1$. In such cases, the stabilized expressions in \cite[Section 3.1]{AFKLINTEBERG2021} should be used instead. In that work, the authors also stabilized their expression for $\widetilde{P}_1^1(t_0)$, written as $\log(t_2+u_2)-\log(t_1+u_1)$, to maintain accuracy near $[-1,1]$. By contrast, the expression for $\widetilde{P}_1^1(t_0)$ in \eqref{eq:pk1_initial} remains accurate without additional stabilization.
\end{remark}

The main benefit of this translation is that the magnitudes of the integrals $\widetilde{P}_k^m(t_0)$ for $k>1$ are significantly reduced compared to those of the standard monomial basis (see \eqref{eq:monomial_integral}). 
This can be understood from the fact that the dominant contribution to both the standard and modified basis integrals arises from a neighborhood around $t=a$.
In the standard basis, the monomials $t^{k-1}$ are generally $\mathcal{O}(1)$ in this region (unless $|a|$ is small), whereas the translated basis functions $(t-a)^{k-1}$ vanish at $t=a$ and are therefore small in the dominant region. 
This makes the modified basis integrals more consistent with the actual magnitude of the integral $I_m$, thereby reducing cancellation.
Equivalently, the translation enforces a factored representation of the near-vanishing behavior: it works directly with powers of $(t-a)$, as in the right-hand side of \eqref{eq:poly}, rather than reconstructing them through the expanded polynomial form on the left-hand side.
The case $k=1$, where the modified basis integral reduces to that of the standard basis, needs special treatment, as we now explain.

\subsubsection{Stable evaluation of the constant coefficient \boldmath{$d_1$}}\label{sss:stable_eval}
Thanks to the design of the translated basis, the $k=1$ (constant) term now dominates the overall integral $I_m$.
Yet the large magnitude of $\widetilde{P}_1^m(t_0)$ (relative to $|I_m|$) remains a source of numerical sensitivity, as any error in computing the coefficient $d_1(t_0)$ is much amplified, degrading the overall accuracy.
Since $d_1(t_0)=F(a)$, this coefficient is small due to $\ftilde(t,\xx)$ nearly vanishing at $t=a$. Thus its value obtained from solving the Vandermonde system given the data $\{F_j\}$ cannot in general have high \textit{relative} accuracy.

To address this, we instead \textit{interpolate the density} $\sigmatilde$, then
compute $d_1(t_0)$ directly from known analytic factors.
Letting $\sigmatilde_j=\sigmatilde(t_j)$, with $t_j$ being the Gauss--Legendre nodes on $[-1,1]$, we interpolate $\sigmatilde(a)\eqqcolon\sigmatilde_a$ using barycentric Lagrange interpolation \cite{barycentric}. Then, by recalling that $h(t,t_0)=|t-t_0|$ from Table \ref{tab:ssq_summary}, and \eqref{eq:F}, we set
\begin{equation}
d_1(t_0) = \ftilde(a,\xx)\sigmatilde_a\frac{|a-t_0|^m}{(R(a)^2)^{m/2}},
\label{eq:d1}
\end{equation}
which retains high relative accuracy.

\subsubsection{Tests of effectiveness of the modified monomial basis}\label{sss:test_mod_monomial}
To illustrate the improvement, we consider a simplified integral on $[-1,1]$, discretized by a $20$-point Gauss--Legendre quadrature rule, mimicking the near-vanishing numerator of relevant line potentials. Let $t_0=a+ib$, $a,b\in\mathbb{R}$ and define
\begin{equation}
I_m(t_0) = \int_{-1}^1 \frac{\left((t-a)^2+\delta\right)\sigma(t)}{|t-t_0|^m}\dt=
\int_{-1}^1 \frac{\left((t-a)^2+\delta\right)\sigma(t)}{\left((t-a)^2+b^2\right)^{m/2}}\dt,\quad m=1,3,5,
\label{eq:ex1_int}
\end{equation}
choosing an analytic density $\sigma(t)=\sin(t+1.53)$.
The numerator vanishes quadratically at $t=a$ up to a small offset $\delta>0$.
Specifically, note that the $m=3$ case of \eqref{eq:ex1_int} is an integrand form occurring when using Stokes slender body theory (SBT) (see the $I_3$ component in Section~\ref{ss:Stokes}) on the
straight fiber segment $\ggamma(t)=(t,0,0)$, $t\in[-1,1]$, with nearby evaluation point $(a,b,0)$. In that setting, $\delta$ represents either the square of the target distance, or of the fiber radius (in the doublet correction to SBT).

In the first test, we let $t_0=0.23+ib$ and vary the distance parameter $b$ logarithmically from $10^{-5}$ to $1$, with fixed $\delta=10^{-8}$. The result is presented in Figure \ref{fig:ex1}. Figure \ref{fig:ex1_err_vs_dist} shows that the modified basis strategy maintains errors near machine precision for all $m$, while the standard basis suffers from cancellation errors when $m=3,5$. Figure \ref{fig:ex1_norm_vs_dist} compares the maximum norm of the quadrature vector to $|I_m|$, confirming that the discrepancy in magnitudes correlates with increased error.

\begin{figure}[t!]
\centering
\begin{subfigure}[t]{0.5\textwidth}
\includegraphics[width=\linewidth]{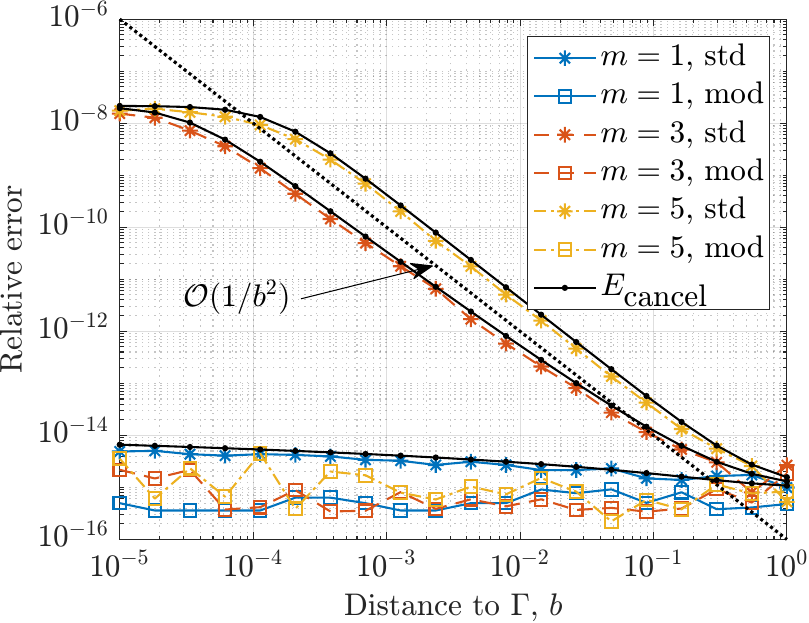}
\caption{}
\label{fig:ex1_err_vs_dist}
\end{subfigure}
\begin{subfigure}[t]{0.49\textwidth}
\includegraphics[width=\linewidth]{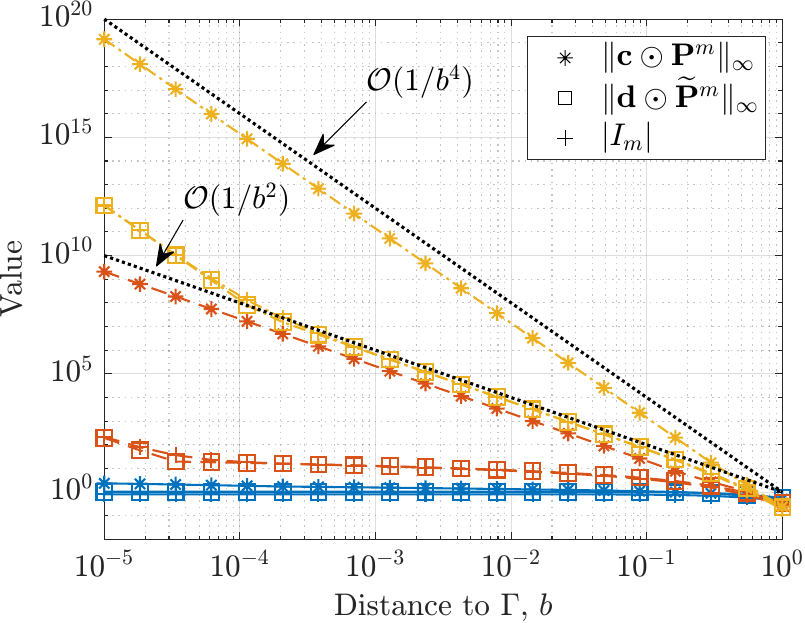}
\caption{}
\label{fig:ex1_norm_vs_dist}
\end{subfigure}
\caption{Panel (a) shows the relative error in computing the integral \eqref{eq:ex1_int} using standard (std) and modified (mod) monomial bases, as the imaginary part of the target point $t_0=a+ib$ decreases. For stronger singularities ($m=3,5$), the standard basis suffers from cancellation, while the modified basis maintains high accuracy. Solid black lines with dots indicate estimated cancellation errors from Remark \ref{rem:cancellation_errest}. Panel (b) shows the maximum norm of the quadrature vector compared to the magnitude of the true integral $I_m$.
For the standard basis with $m>1$, the quadrature vector is orders of magnitude larger than $|I_m|$, providing direct evidence that the loss of accuracy in panel (a) is caused by cancellation. Here, the colors and line types indicate the value of $m$ as in (a).}
\label{fig:ex1}
\end{figure}

In Figure \ref{fig:ex3_err_vs_delta}, we vary $\delta$ and observe that the modified basis yields small errors regardless of how small the numerator becomes, whereas the standard basis performs well only for larger $\delta$. 
Finally, Figure \ref{fig:ex2_err_vs_dist_corr_coeff} demonstrates that stable computation of the coefficient $d_1(t_0)$ is essential for the modified basis to perform well.
In this experiment, the relative error in $d_1(t_0)$ is approximately $1.7 \times 10^{-16}$ when computed using the stabilized formula \eqref{eq:d1}, compared to $5.4 \times 10^{6}$ when obtained directly from the Vandermonde solve.

\begin{figure}[t!]
\centering
\begin{minipage}[t]{.485\textwidth}
\centering
\includegraphics[width=\linewidth]{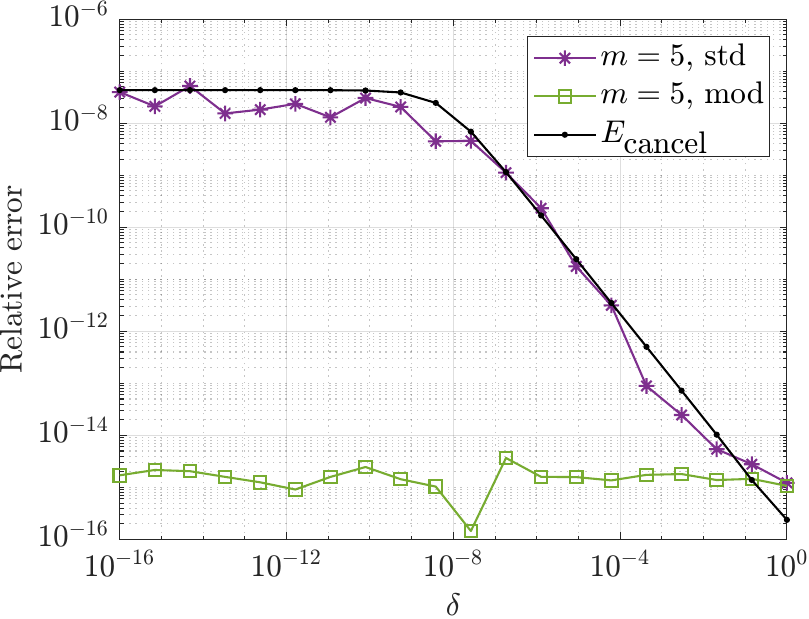}
\caption{Relative error in evaluating the integral \eqref{eq:ex1_int} for fixed $t_0=0.23+10^{-4}i$ and varying $\delta$. The modified (mod) basis maintains high accuracy across all $\delta$, while the standard (std) basis suffers from cancellation when the numerator becomes too small, a behavior that is captured well by the approximate cancellation error from Remark \ref{rem:cancellation_errest}, shown as the solid black line with dots.}
\label{fig:ex3_err_vs_delta}
\end{minipage}%
\hfill
\begin{minipage}[t]{.485\textwidth}
\centering
\includegraphics[width=\linewidth]{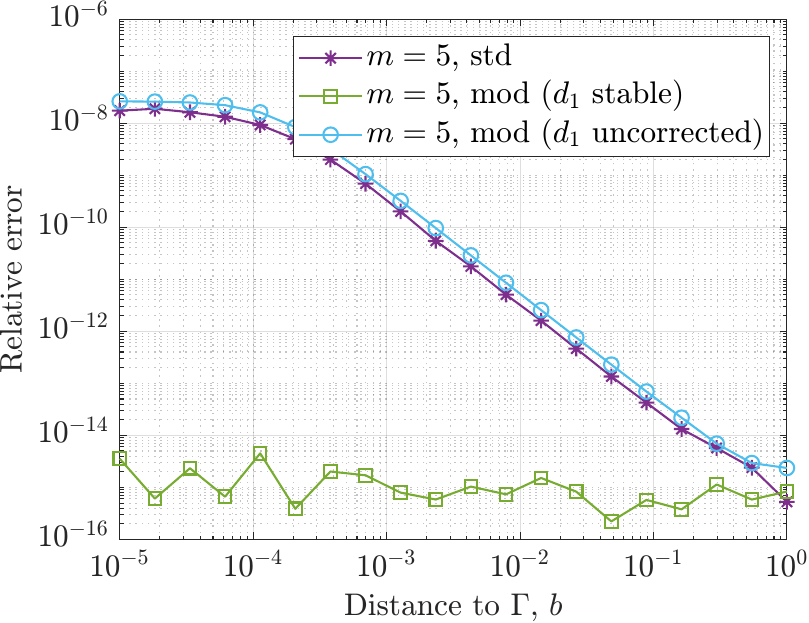}
\caption{Comparison of relative error in evaluating the integral \eqref{eq:ex1_int} using the modified (mod) basis with and without the stable evaluation of the coefficient $d_1(t_0)$. When using the uncorrected value from the Vandermonde solve, the accuracy degrades significantly, resembling that of the standard (std) basis. Results are for fixed $\delta=10^{-8}$.}
\label{fig:ex2_err_vs_dist_corr_coeff}
\end{minipage}
\end{figure}

\subsection{Closed curves: modified Fourier basis}\label{ss:closed_curves}
For periodic problems, unlike for monomials, a simple translation does not result in bases satisfying the three design principles, because the functions $e^{ikt}$ never vanish. We now show how to construct a modified Fourier basis,
where all but one of the basis functions
vanish at $t=a$, yet which span the same space as the standard Fourier basis, and for which there remains a fast transform to evaluate coefficients. A fast transform is crucial because the number of nodes is typically much larger than for an open curve. 

\begin{newdef}[Modified Fourier basis]\label{def:modified_fourier_basis}Let $a\in\mathbb{R}$ be a shift parameter. For an even integer $n>0$, the modified Fourier basis of order $n$ centered at $a$, is defined as
\begin{equation}
\left\{1,~\sin(t-a),~\sin^2\left(\dfrac{t-a}{2}\right)e^{ikt}:k=-n/2+1,\dots,n/2-2\right\}.
\label{eq:mod_fourier_basis_def}
\end{equation}
\end{newdef}

\begin{remark}\label{rem:basis_comparison}The basis in Definition \ref{def:modified_fourier_basis} can be viewed as the periodic analogue of a modified monomial basis $\{1,~t-a,~(t-a)^2t^k\}$, which we have checked performs comparably to the translated monomial basis of Definition \ref{def:modified_monomial_basis} on the simplified example in Section \ref{sss:test_mod_monomial}. However, the latter is more versatile, as it naturally accommodates a broader range of vanishing behaviors.
In principle, one could construct a modified Fourier basis with higher-order vanishing at $t=a$, e.g.~by replacing the oscillatory functions with $\sin^3((t-a)/2)e^{ikt}$ and supplementing with missing modes to ensure completeness. In practice, however, the basis in Definition \ref{def:modified_fourier_basis} has proven sufficient to accurately capture periodic functions with vanishing beyond quadratic order, rendering such complications unnecessary.
\end{remark}

As before, let $t_0=a+ib$ be a root of $R(t)^2$ defined in \eqref{eq:R2}. The $2\pi$-periodic function $F(t)$ in \eqref{eq:F}, sampled at $n$ equispaced nodes with even $n$, is then approximated as
\begin{equation}
F(t) \approx a_0(t_0) + a_1(t_0)\sin(t-a) + \sum\limits_{k=-n/2+1}^{n/2-2} b_k(t_0)\sin^2\left(\dfrac{t-a}{2}\right)e^{ikt},\quad t\in[0,2\pi).
\label{eq:Fexp_mod_fourier}
\end{equation}
The coefficients $\{a_0,a_1,b_k\}$ may be obtained via a sparse linear transformation of the standard Fourier coefficients of $F(t)$, themselves computed efficiently using the FFT, with total cost $\mathcal{O}(n\log n)$.
We give the formulae below in Theorem \ref{thm:mod_coeffs}. 
The expansion in the modified Fourier basis yields the approximation
\begin{equation}
I_m \approx I_m^{\textrm{mod}} = a_0(t_0)\widetilde{B}_0^m(t_0) + \sum_{k=-n/2+1}^{n/2-2} b_k(t_0)\widetilde{S}_k^m(t_0),
\label{eq:Imapprox_mod_fourier}
\end{equation}
where the basis integrals $\widetilde{S}_k^m(t_0)$ are defined and evaluated as in Lemma \ref{lem:mod_fourier_rec} below. As in the open-curve setting, the integral associated with the constant basis function coincides with that of the standard Fourier basis, i.e.~$\widetilde{B}_0^m(t_0)=S_0^m(t_0)$ from \eqref{eq:fourier_integral}. Moreover, the absence of the $a_1$ term in $I_m^{\textrm{mod}}$ is explained by the vanishing of the $\sin(t-a)$ basis integral due to symmetry.

\begin{thm}[Modified Fourier coefficients]\label{thm:mod_coeffs}Let $f:[0,2\pi)\rightarrow\mathbb{R}$ be a $2\pi$-periodic bandlimited function given by a truncated Fourier series
\begin{equation}
f(t)
= \sum\limits_{k=-n/2}^{n/2-1} c_ke^{ikt},
\label{eq:fourier_exp_thm}
\end{equation}
with coefficients $c_k\in\mathbb{C}$, and $n$ even. Then there exists a unique representation of $f$ in the modified basis
\begin{equation}
f(t) =
a_0 + a_1\sin(t-a) + \sum\limits_{k=-n/2+1}^{n/2-2} b_k \sin^2\left(\dfrac{t-a}{2}\right)e^{ikt},
\label{eq:mod_fourier_exp_thm}
\end{equation}
with coefficients $a_0,a_1,\{b_k\}$ given by
\begin{align}
b_{-n/2+1}&= -4e^{-ia}c_{-n/2}, &b_{-n/2+2}&=2e^{-ia}\left(b_{-n/2+1} - 2c_{-n/2+1}\right)
\label{bend}
\\
b_{n/2-2} &= -4e^{ia}c_{n/2-1}, &b_{n/2-3}&= 2e^{ia}\left(b_{n/2-2}-2c_{n/2-2}\right),
\end{align}
\begin{align}
b_k&= e^{ia}\left(2b_{k+1}-e^{ia}b_{k+2}-4c_{k+2}\right),&k&=n/2-4,n/2-3,\dots,1, \\
b_k&= e^{-ia}\left(2b_{k-1}-e^{-ia}b_{k-2}-4c_k\right), &k&=-n/2+3,-n/2+4,\dots,-1,
\label{bk}
\end{align}
\begin{align}
a_0 = d_2-b_0/2,\quad b_0 = -2\left(e^{ia}d_1+e^{-ia}d_3\right),\quad a_1= i\left(e^{ia}d_1-e^{-ia}d_3\right),
\end{align}
with
\begin{align}
d_1 = -\frac{1}{2}b_1+\frac{e^{ia}}{4}b_2+c_1,\quad d_2 = \frac{e^{-ia}}{4}b_{-1}+\frac{e^{ia}}{4}b_1+c_0,\quad d_3 = -\frac{1}{2}b_{-1}+\frac{e^{-ia}}{4}b_{-2}+c_{-1}.
\end{align}
\end{thm}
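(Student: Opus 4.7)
The plan is to observe that each modified basis function admits an exact expansion of length three in the standard Fourier basis, so the map between coefficient vectors is sparse (almost tridiagonal); I would then invert this map by propagating two one-sided recurrences from the extreme modes toward the center and closing the system on a $3\times 3$ block at modes $0,\pm 1$.

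First, I would expand the three ingredients of the modified basis using Euler's formulas. Writing $\sin^2\!\bigl(\tfrac{t-a}{2}\bigr)=\tfrac12-\tfrac{e^{-ia}}{4}e^{it}-\tfrac{e^{ia}}{4}e^{-it}$ and $\sin(t-a)=\tfrac{1}{2i}\bigl(e^{-ia}e^{it}-e^{ia}e^{-it}\bigr)$ gives the key identity $\sin^2\!\bigl(\tfrac{t-a}{2}\bigr)e^{ikt}=\tfrac12 e^{ikt}-\tfrac{e^{-ia}}{4}e^{i(k+1)t}-\tfrac{e^{ia}}{4}e^{i(k-1)t}$, so $b_k$ contributes only to Fourier modes $k-1,k,k+1$, $a_1\sin(t-a)$ contributes only to modes $\pm1$, and $a_0$ contributes only to mode $0$. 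A simple dimension count confirms that both bases have $n$ elements, so uniqueness will follow from invertibility of the resulting sparse system.

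Second, I would equate the coefficient of $e^{ijt}$ on both sides of \eqref{eq:fourier_exp_thm} and \eqref{eq:mod_fourier_exp_thm}. For $|j|\ge 2$ this gives the three-term relation $c_j=\tfrac12 b_j-\tfrac{e^{-ia}}{4}b_{j-1}-\tfrac{e^{ia}}{4}b_{j+1}$, with $b_k$ interpreted as $0$ outside the stated index range; modes $j=\pm1$ pick up an extra $a_1$ contribution, and $j=0$ picks up $a_0$. At $j=-n/2$ and $j=n/2-1$, only the outermost $b_k$ survives, yielding the boundary formulas for $b_{-n/2+1}$ and $b_{n/2-2}$ in \eqref{bend}. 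Evaluating the three-term relation at the adjacent modes $j=-n/2+1$ and $j=n/2-2$, where one neighbor is forced to zero, pins down $b_{-n/2+2}$ and $b_{n/2-3}$.

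Third, with these seeds in hand, I would solve the three-term relation for the ``next'' unknown $b_k$ at each subsequent interior mode, running one recurrence inward from the left boundary and another inward from the right boundary; after matching indices, these are exactly the recurrences \eqref{bk} stated in the theorem. Finally, for the three unknowns $a_0,a_1,b_0$ I would substitute the now-known $b_{\pm1},b_{\pm2}$ into the mode equations at $j=-1,0,1$, collect the resulting right-hand sides into the auxiliary quantities $d_1,d_2,d_3$ of the statement, and solve the resulting $3\times3$ linear system (which is trivially invertible because $|e^{\pm ia}|=1$) to obtain $b_0$, then $a_1$, then $a_0$.

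The main obstacle is purely bookkeeping: keeping track of which three-term equation is being solved for which $b_k$ at each step, and verifying that the two recurrences meet cleanly so that the three mode equations at $j\in\{-1,0,1\}$ leave exactly three unknowns $a_0,a_1,b_0$ to determine. Once this is done, existence is immediate by construction and uniqueness follows because each recurrence step, as well as the final $3\times3$ solve, has a unique solution.
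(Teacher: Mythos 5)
Your proposal is correct and follows essentially the same approach as the paper's proof: expand $\sin(t-a)$ and $\sin^2\bigl(\tfrac{t-a}{2}\bigr)$ via Euler's formula, match coefficients of $e^{ijt}$ between the two expansions, and solve the resulting near-tridiagonal system by two one-sided recurrences seeded at the extreme modes and closed by a small linear solve at $j\in\{-1,0,1\}$. The paper states this quite tersely and relegates the structural observation (tridiagonal above and below, solved inward by back-substitution) to the surrounding discussion, so your write-up is simply a more explicit version of the same argument.
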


The structure of \eqref{bend}--\eqref{bk} is that the $n\times n$ matrix mapping $(a_0,a_1,\{b_k\})$
to $\{c_k\}$ is upper-triangular tridiagonal for $k>0$ and
lower-triangular tridiagonal for $k<0$. This means that, although its matrix
inverse is dense, the $(a_0,a_1,\{b_k\})$ coefficients can be solved given
$\{c_k\}$ in linear time by back-substitution (3-term recurrences) starting from the largest $k$ magnitudes and working inwards.
The characteristic values of the recurrences show that they are neutrally stable
(at most quadratic secular growth).

\begin{proof}
Expand the $\sin$ functions in \eqref{eq:mod_fourier_exp_thm} using Euler's formula, and collect coefficients in front of each $e^{ikt}$, $k=-n/2,\ldots,n/2-1$. Then equate the right hand sides of \eqref{eq:fourier_exp_thm} and \eqref{eq:mod_fourier_exp_thm}, and match coefficients.
\end{proof}

\begin{lemma}[Modified Fourier basis integrals]\label{lem:mod_fourier_rec}Let $k\in\mathbb{N}$, $m\in\{2s+1:s\in\mathbb{N}\}$, $t_0=a+ib$, with $a,b\in\mathbb{R}$, $|b|>0$, and $\alpha=e^{-|b|}$, $0<\alpha<1$. Then the basis integrals of the modified Fourier basis in Definition \ref{def:modified_fourier_basis}
\begin{equation}
\widetilde{B}_0^m(t_0) = \int_0^{2\pi} \dfrac{\dt}{|e^{it}-e^{it_0}|^m},\quad \widetilde{B}_1^m(t_0) = \int_0^{2\pi} \dfrac{\sin(t-a)}{|e^{it}-e^{it_0}|^m}\dt,\quad \widetilde{S}_k^m(t_0) = \int_{0}^{2\pi} \dfrac{\sin^2\left(\frac{t-a}{2}\right)e^{ikt}}{|e^{it}-e^{it_0}|^m}\dt
\end{equation}
can be computed as
\begin{equation}
\begin{split}
\widetilde{B}_0^m(t_0) &= \frac{2}{(1-\alpha)^{m-1}}\mu_0^m(\alpha),\qquad \widetilde{B}_1^m(t_0) = 0,\\
\widetilde{S}_k^m(t_0) 
&= C_1\left(-C_2\,\mu_k^m(\alpha) + \frac{2}{m-2}\left(C_3\,\mu_k^{m-2}(\alpha) - C_4\,\mu_{k-1}^{m-2}(\alpha)\right)\right),
\label{eq:Stilderec}
\end{split}
\end{equation}
where
\begin{equation}
C_1=\frac{(1-\alpha)^{3-m}}{2}e^{ika},\qquad
C_2=\frac{(1-\alpha)^2}{2\alpha(1+\alpha^2)},\qquad
C_3=\frac{m/2+k-1}{2\alpha},\qquad
C_4=\frac{m/2+k-2}{1+\alpha^2}.
\end{equation}
Moreover,
\begin{equation}
    \mu_k^m(\alpha) \coloneqq \frac{(1-\alpha)^{m-1}}{2e^{ika}}S_k^m(t_0)
    \label{eq:mukm}
\end{equation}
can be computed through recurrence formulas provided in Appendix \ref{app:rec} for $m=1,3,5$, with the standard basis integrals $S_k^m(t_0)$ found in \eqref{eq:fourier_integral}. Note that $\widetilde{S}_{-k}^m(t_0)=\overline{\widetilde{S}_{k}^m(t_0)}$.
\end{lemma}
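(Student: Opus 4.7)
The strategy is to reduce each of the three integrals to closed form in terms of the standard Fourier integrals $S_k^m(t_0)$ (equivalently, the normalized $\mu_k^m(\alpha)$), using the master algebraic identity
\begin{equation*}
|e^{it}-e^{it_0}|^2 = 1 - 2\alpha\cos(t-a) + \alpha^2 = (1-\alpha)^2 + 4\alpha\sin^2\bigl((t-a)/2\bigr),
\end{equation*}
which follows from $e^{it_0} = \alpha e^{ia}$ when $b>0$ (and symmetrically when $b<0$, using $\alpha = e^{-|b|}$). I would dispatch the three claims in order of increasing difficulty.

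For $\widetilde{B}_1^m$, I would note that the integral coincides with $S_0^m(t_0)$ from \eqref{eq:fourier_integral} at $k=0$, so the definition of $\mu_0^m(\alpha)$ in \eqref{eq:mukm} immediately yields the stated prefactor. For $\widetilde{B}_2^m = 0$, I would substitute $s = t-a$ and use $2\pi$-periodicity to shift the integration interval to $[-\pi,\pi]$; the denominator $(1 - 2\alpha\cos s + \alpha^2)^{m/2}$ is even in $s$ while $\sin s$ is odd, so the integral vanishes. The symmetry $\widetilde{S}_{-k}^m = \overline{\widetilde{S}_k^m}$ at the end of the statement is similarly immediate from conjugating the integrand and noting that $|e^{it}-e^{it_0}|$ is real.

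For the main $\widetilde{S}_k^m$ formula, I would first solve the master identity for $\sin^2((t-a)/2) = (|e^{it}-e^{it_0}|^2 - (1-\alpha)^2)/(4\alpha)$ and substitute into the integrand. The $|e^{it}-e^{it_0}|^2$ piece cancels two powers in the denominator, producing the clean intermediate form
\begin{equation*}
\widetilde{S}_k^m(t_0) = \frac{1}{4\alpha}\bigl[S_k^{m-2}(t_0) - (1-\alpha)^2 S_k^m(t_0)\bigr],
\end{equation*}
which under the scaling in \eqref{eq:mukm} reads $\widetilde{S}_k^m(t_0) = (C_1/\alpha)\bigl[\mu_k^{m-2}(\alpha) - \mu_k^m(\alpha)\bigr]$, with $C_1 = e^{ika}(1-\alpha)^{3-m}/2$ as in the statement.

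To reshape this two-term expression into the stated three-term form involving $\mu_{k-1}^{m-2}$, I would derive and apply a recurrence among $\mu_k^m$, $\mu_k^{m-2}$, and $\mu_{k-1}^{m-2}$. The cleanest route is to combine two integration-by-parts identities: on one hand, $\int_0^{2\pi}(d/dt)[e^{ikt}|e^{it}-e^{it_0}|^{-(m-2)}]\,dt = 0$ expands (using $\sin(t-a)e^{ikt} = (e^{-ia}e^{i(k+1)t} - e^{ia}e^{i(k-1)t})/(2i)$) to $e^{-ia}S_{k+1}^m - e^{ia}S_{k-1}^m = -2k\,S_k^{m-2}/((m-2)\alpha)$; on the other hand, substituting $\cos(t-a) = (1+\alpha^2 - |e^{it}-e^{it_0}|^2)/(2\alpha)$ into $\int \cos(t-a)e^{ikt}/|e^{it}-e^{it_0}|^m\,dt$ yields $e^{-ia}S_{k+1}^m + e^{ia}S_{k-1}^m = (1+\alpha^2)S_k^m/\alpha - S_k^{m-2}/\alpha$. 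Subtracting these and rewriting in the $\mu$-normalization produces the required three-term recurrence; substituting it into the intermediate form reproduces the stated $C_2, C_3, C_4$. The main obstacle will be purely algebraic bookkeeping: identifying the precise linear combination of the two identities and carefully tracking the $(1-\alpha)^{m-1}$ and $e^{\pm ika}$ prefactors so that the final coefficients match exactly.
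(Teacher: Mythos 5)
Your proposal is correct, and for the main $\widetilde{S}_k^m$ formula it takes a genuinely different route from the paper. The paper expands $\sin^2\!\bigl((t-a)/2\bigr) = \tfrac12 - \tfrac14 e^{i(t-a)} - \tfrac14 e^{-i(t-a)}$ and pairs each exponential with $e^{ikt}$, yielding a linear combination of $S_{k-1}^m,\,S_k^m,\,S_{k+1}^m$ (shifted in $k$, same power $m$); it then converts to $\mu$-normalization and substitutes the appendix recurrences \eqref{eq:mu_rec} to collapse to the stated three-term form. You instead solve the algebraic identity $|e^{it}-e^{it_0}|^2 = (1-\alpha)^2 + 4\alpha\sin^2\!\bigl((t-a)/2\bigr)$ for $\sin^2$, so that two powers of the denominator cancel and you land on the tidy two-term intermediate $\widetilde{S}_k^m = (C_1/\alpha)\bigl(\mu_k^{m-2} - \mu_k^m\bigr)$ (shifted in $m$, same $k$). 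To reshape this to the lemma's three-term form involving $\mu_{k-1}^{m-2}$ you then derive the needed relations yourself---via the integration-by-parts identity for $\tfrac{d}{dt}\bigl[e^{ikt}|e^{it}-e^{it_0}|^{-(m-2)}\bigr]$ and the $\cos(t-a)$ substitution---whereas the paper simply invokes the appendix. Your two routes ultimately rest on the same underlying recurrences; your version is more self-contained, and the two-term intermediate is elegant. One small imprecision: you say ``subtracting these'' two identities produces the needed recurrence, but in fact you need \emph{both} the sum and the difference (the sum gives the forward-in-$k$ relation, the difference the backward-in-$k$ one, and eliminating $\mu_{k-1}^m$ between them yields the three-term relation among $\mu_k^m,\,\mu_k^{m-2},\,\mu_{k-1}^{m-2}$). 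You flag this under ``identifying the precise linear combination,'' so the gap is acknowledged rather than overlooked. It is also worth noting that your two-term intermediate form, while simpler, is the \emph{difference} of two quantities that both blow up as $\alpha\to 1$ (close targets), so it likely suffers cancellation of exactly the kind the paper is designing against---this explains why the lemma is stated in the three-term form, where the coefficients $C_2,C_3,C_4$ keep each term commensurate with the true value.
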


\begin{proof}
To derive the recurrence relation for $\widetilde{S}_k^m$ in \eqref{eq:Stilderec}, first expand the $\sin$ functions in the numerator of $\widetilde{S}_k^m(t_0)$ using Euler's formula. This yields a linear combination of the standard Fourier basis integrals with shifted indices, $S_{k-1}^m(t_0)$, $S_{k}^m(t_0)$, $S_{k+1}^m(t_0)$. Invoke the relation \eqref{eq:mukm}, which follows from \cite[Appendix A]{krantz2024}, which leads to a recurrence in terms of $\mu_k^m(\alpha)$. Substitute the recurrence relations for $\mu_k^m(\alpha)$, given in Appendix \ref{app:rec}, into this expression and simplify to obtain the final result.
The formulas for $\widetilde{B}_0^m(t_0)$ follow directly from the corresponding initial values for $\mu_k^m(\alpha)$. Lastly, $\widetilde{B}_1^m(t_0)$ vanishes due to odd symmetry of the integrand about $t=a$.
\end{proof}

\begin{remark}
The real-valued denominator $|e^{it}-e^{it_0}|^m$ handled above, needed for the 3D case, is trickier
than the complex-valued $(e^{it}-e^{it_0})^m$ needed in prior 2D Fourier SSQ work
\cite{afKlinteberg2024,bao2024}. The latter basis integrals need only the residue theorem, whereas the former need complete elliptic integrals (see Appendix~\ref{app:rec} and \cite{krantz2024}).
\end{remark}

This modified basis behaves similarly to its monomial counterpart. For $|k|>1$, the integrals $\widetilde{S}_k^m(t_0)$ are of comparable magnitude to the true value of $I_m$. The accuracy of the approximation is sensitive to the small constant coefficient $a_0(t_0)$, which must be computed to high relative precision. Following the same strategy as in the open-curve case, we set
\begin{equation}
a_0(t_0) = \ftilde(a,\xx)\sigmatilde_a\frac{|e^{ia}-e^{it_0}|^m}{(R(a)^2)^{m/2}},
\label{eq:a0}
\end{equation}
where $\sigmatilde_a$ is obtained from density samples $\{\sigmatilde_j\}$ via trigonometric interpolation, implemented either in its Fourier or barycentric form.

Since the performance of the modified Fourier basis is similar to that of the modified monomial basis described in Section~\ref{ss:open_curves}, we do not include tests on simpler integrals here, and instead turn to an application in the following section.

\begin{remark}[More efficient computation of modified Fourier coefficients]\label{rem:modified_fourier_coeff}While the modified Fourier coefficients
$b_k(t_0)$ in \eqref{eq:Fexp_mod_fourier} can be computed directly using
the FFT and Theorem \ref{thm:mod_coeffs} at a cost of $\mathcal{O}(n \log n)$, the structure of the recurrence formulas of $b_k(t_0)$ renders this somewhat slow in our implementation
(this may not be true for a low-level HPC implementation).
We find that a more efficient approach is to rewrite equation \eqref{eq:Fexp_mod_fourier} in the form
\begin{equation}
G(t) \coloneqq \frac{F(t) - a_0(t_0) - a_1(t_0) \sin(t - a)}{\sin^2\left(\frac{t - a}{2}\right)} \approx \sum_{k = -n/2+1}^{n/2-2} b_k(t_0) e^{ikt}.  
\end{equation}
Now $a_0(t_0)$ is evaluated as in \eqref{eq:a0}, while $a_1(t_0)$ is determined using a standard Fourier interpolation procedure of $F(t)$. The remaining coefficients $b_k(t_0)$ are then obtained via an additional FFT of $n$ samples of the function $G(t)$. Note that this procedure fails if $a$ coincides with a trapezoidal node $t_j$, and may suffer from catastrophic cancellation if $a$ lies very close to one. In such cases, one can revert to computing the coefficients directly via Theorem \ref{thm:mod_coeffs}.
\end{remark}

\subsection{Adjoint method for target-specific quadrature weights}\label{ss:adjoint}
The evaluation of $I_m$ in \eqref{eq:Imswap} according to \eqref{eq:Imapprox_shifted_monomial} and \eqref{eq:Imapprox_mod_fourier} uses the modified basis coefficients. For a 
specific numerator function $F(t)$ in \eqref{eq:F}, i.e.~for a particular density $\sigmatilde$, these are determined using its samples $\FF\coloneqq\{F_j\}_{j=1}^n$. 
In many applications, however, the same target and kernel appear for multiple densities, as in iterative boundary integral solvers. It is then more efficient to precompute \textit{target-specific quadrature weights}
$\LL^m\coloneqq\{L_j^m\}_{j=1}^n$
that depend only on the geometry, kernel, and target location, but can be reused to evaluate $I_m$ for any new smooth density with samples $\boldsymbol{\sigma}\coloneqq\{\sigmatilde(t_j)\}_{j=1}^n$, so that
\begin{equation}
    I_m\approx \boldsymbol{\sigma}^T\LL^m.
    \label{eq:Im_adj_general}
\end{equation}
We refer to this as the \textit{adjoint method}.
Prior adjoint methods have been developed for the SSQ with standard monomial \cite[Section 2.2.2]{AFKLINTEBERG2021} and Fourier bases \cite[Section 2.4]{afKlinteberg2024}; however, in these works the weights,
denoted by $\boldsymbol{\lambda}^m\coloneqq\{\lambda_j^m\}_{j=1}^n$, act on the numerator samples
$\FF$ rather than the density samples $\boldsymbol{\sigma}$, so that $I_m \approx \FF^T \boldsymbol{\lambda}^m$.
We connect the two by noting that $\FF=\mathbf{g}\odot\ssigma$, where the vector $\mathbf{g}$ 
contains geometric factor weights $g_j=g(t_j)$, with $t_j$ denoting the quadrature nodes, and
\begin{equation}
g(t) := \ftilde(t,\xx)\frac{h(t,t_0)^m}{\left(R(t)^2\right)^{m/2}},
\end{equation}
recalling \eqref{eq:F}.
Comparing the two above formulae for $I_m$, one immediately gets
\begin{equation}
\LL^m = \mathbf{g}\odot\llambda^m.
\label{Lfromlambda}
\end{equation}
However, prior methods which compute $\llambda^m$ are unstable in the near-singular settings that we address here, yielding large, oscillatory $\lambda_j^m$ entries. To develop an adjoint method
tailored to the modified bases of Sections \ref{ss:open_curves} and \ref{ss:closed_curves} that retains
their stability, one must bypass \eqref{Lfromlambda} and compute $\LL^m$ directly.
Before describing this, we first recap the prior formulation.

\subsubsection{Standard adjoint formulation}\label{sss:standard_adjoint}
Consider the general setup in \eqref{eq:Imswap}--\eqref{eq:Imapprox}, where the basis function $\varphi$, coefficients $\mathbf{c}$, and basis integrals $\mathbf{B}^m$ may come from either the standard or modified monomial and Fourier bases. 
Let $V$ be the $n\times n$ Vandermonde matrix with entries $V_{jk}=\varphi_k(t_j)$. Then, \eqref{eq:Imapprox} can be written as
\begin{equation}
    I_m \approx \mathbf{c}^T\BB^m = \left(V^{-1}\mathbf{F}\right)^T\mathbf{B}^m = \FF^T\left(\left(V^{-1}\right)^T\BB^m\right).
    \label{eq:Im_adj_std}
\end{equation}
Thus, $\llambda^m=\left(V^{-1}\right)^T\BB^m$, which is the solution to the transposed Vandermonde system
\begin{equation}
    V^T\llambda^m=\BB^m.
    \label{eq:vander_transposed}
\end{equation}
In near-singular cases with nearly vanishing numerators,
this formulation becomes unstable because the entries in $\BB^m$ can attain large magnitudes. For the standard bases, several entries typically have large magnitudes, whereas for the modified bases only the one associated with the constant basis function does. In both cases, these large values give rise to large oscillatory weights $\llambda^m$ and strong cancellation.

\subsubsection{Stabilized adjoint formulation}\label{sss:stablized_adjoint}
The 2nd design principle of a modified basis in Section \ref{s:modified} states that all non-constant basis functions should vanish at the near-vanishing point. Consequently, the vector $\BB^m$ is dominated by its first entry $B_1^m$, associated with the constant basis function. To stabilize the computation
we separate this dominant constant contribution.
Let $\overline{\llambda}^m$ and $\llambda_a$ solve, respectively, each of the linear systems
\begin{equation}
    V^T\overline{\llambda}^m = [0,B_2^m,\dots,B_n^m]^T,\qquad V^T\llambda_a=e_1,
    \label{eq:vander_transposed_stable}
\end{equation}
where $e_1=[1,0,\dots,0]^T\in\mathbb{R}^n$. The vector $\llambda_a$ represents interpolation weights satisfying $\sigmatilde(a)\approx\ssigma^T\llambda_a$, which we in practice obtain directly from the barycentric interpolation formula rather than solving the linear system. By linearity,
\begin{equation}
    \llambda^m = B_1^m\llambda_a + \overline{\llambda}^m.
\end{equation}
A direct substitution of this sum into \eqref{Lfromlambda} and \eqref{eq:Im_adj_general}
would yield $I_m\approx(\mathbf{g}\odot\ssigma)^T(B_1^m\llambda_a+\overline{\llambda}^m)$.
As in Section \ref{sss:stable_eval}, the large magnitude of $B_1^m$ would introduce numerical instability by amplifying any interpolation error in $g(a)\sigmatilde(a)\approx(\mathbf{g}\odot\ssigma)^T\llambda_a$. To avoid this,
for this $k=1$ term only we instead interpolate the density and evaluate $g(a)$ analytically, leading to \eqref{eq:Im_adj_general} with
\begin{equation}
   \LL^m = B_1^m g(a)\boldsymbol{\lambda}_a+\mathbf{g}\odot\overline{\boldsymbol{\lambda}}^m,
   \label{LLm}
\end{equation}
our proposed adjoint weight formula; contrast with \eqref{Lfromlambda}.
This applies to both the modified monomial and modified Fourier bases. For the global modified Fourier basis, where $n$ can be large, the system for $\overline{\llambda}^m$ in \eqref{eq:vander_transposed_stable} can be solved efficiently via FFT acceleration, as described in the following remark.

\begin{remark}[FFT-accelerated adjoint weight construction]\label{rem:fourier_adjoint}For the modified Fourier basis in Definition \ref{def:modified_fourier_basis}, we compute the Fourier coefficients of $\overline{\llambda}^m$ in \eqref{eq:vander_transposed_stable} via a three-term recurrence akin to Theorem \ref{thm:mod_coeffs}, then recover the physical-space weight by an inverse FFT, hence reducing the weight construction cost from $\mathcal{O}(n^2)$ to $\mathcal{O}(n\log n)$.

Define the discrete Fourier transform (DFT) of $\overline{\llambda}^m$ as (assuming $n$ even)
\begin{equation}
    \widehat{\Lambda}_\ell = \sum_{j=1}^n \overline{\lambda}_j^me^{i\ell t_j},\quad\ell\in\{-n/2,\dots,-1,0,1,\dots,n/2-1\}.
\end{equation}
The first row of the transposed Vandermonde system in \eqref{eq:vander_transposed_stable} gives $\sum_{j=1}^n\overline{\lambda}_j^m=0$, or equivalently, $\widehat{\Lambda}_0=0$.

Expanding the sine-functions of the modified basis and identifying the DFT of $\overline{\llambda}^m$ leads to $e^{ia}\widehat{\Lambda}_{-1}-e^{-ia}\widehat{\Lambda}_{1}=0$ for the second row, and for the remaining rows,
\begin{equation}
    \frac{1}{2}\widehat{\Lambda}_k-\frac{e^{ia}}{4}\widehat{\Lambda}_{k-1}-\frac{e^{-ia}}{4}\widehat{\Lambda}_{k+1} = \widetilde{S}_k^m, \quad k\in\{-n/2+1,\dots,-1,0,1,\dots,n/2-2\}.
    \label{eq:lambdahat}
\end{equation}
The coefficients $\widehat{\Lambda}_{-1}$, $\widehat{\Lambda}_{0}$, and $\widehat{\Lambda}_{1}$ obtained from the first two relations initialize the recurrence above from which we get the remaining coefficients. The weights $\overline{\llambda}^m$ are then recovered by an inverse DFT.
\end{remark}

\subsection{Computational cost}\label{ss:computational_cost}
The primary computational costs of SSQ are the computation of the basis coefficients and the evaluation of the corresponding basis integrals. For the standard monomial and Fourier bases, the basis integrals can be evaluated in $\mathcal{O}(n)$ operations using recurrence relations, assuming each term is computed in constant time. The basis coefficients are obtained at a cost of $\mathcal{O}(n^2)$ for monomials and $\mathcal{O}(n\log n)$ for Fourier bases via the FFT. Note that $n$ is typically small and fixed for the monomials, as refinement is achieved by increasing the number of panels. 

The modified bases introduced in this work retain the same asymptotic complexities, with only modest increases in the constant factors. For monomials, the only additional step is a barycentric interpolation used to stabilize the constant coefficient. For Fourier bases, the extra work consists of one additional recurrence, an interpolation in Fourier or barycentric form for the stabilization step, and a Fourier interpolation to compute the coefficients as described in Remark~\ref{rem:modified_fourier_coeff}. 

In the adjoint formulation, the basis coefficients are replaced by adjoint weights. For the standard monomial and Fourier bases, this does not change the computational cost, and the same holds for the modified monomial basis using the stabilized adjoint formulation. The only additional cost occurs for the modified Fourier basis, where the adjoint weights are computed as described in Remark~\ref{rem:fourier_adjoint}, introducing one additional recurrence relative to the non-adjoint case.

In summary, the modified approach adds only a minor overhead beyond standard SSQ, both in the direct and adjoint formulation.

\subsection{When to use the modified bases}\label{ss:when_to_use_mod_basis}
For monomial bases on the interval $[-1,1]$, the translated (modified) monomial basis can safely be used whenever the complex root $t_0=a+ib$ satisfies $|a|\leq 1$, though it may still offer benefits for slightly larger $|a|$. The magnitude of the modified basis integrals grows roughly as $(1+|a|)^k$ for $k=1,\dots,n$, which for large $n$ and $|a|>1$ results in large, oscillatory weights and increased cancellation. This is not problematic in practice, since the standard monomial basis performs well in this regime where cancellation is mild. For periodic problems, the modified Fourier basis remains stable for all target locations. 

For efficiency, the use of the modified bases can be limited to $1/|\rr|^3$ and $1/|\rr|^5$ ($m=3,5$)-type kernels with nearly-vanishing numerators and very close evaluations, e.g.~when $|b|$ falls below a threshold such as $10^{-2}$, since cancellation only arises for near-singular targets and the modified formulation incurs a slightly higher computational cost.

\section{Numerical experiments}\label{s:numerical_experiments}
In Section \ref{sss:test_mod_monomial}, we demonstrated on a prototype integral that the standard monomial basis yields growing error as the distance between the target point and the curve $\Gamma$ decreases, whereas the modified (translated) basis maintains near machine-precision accuracy across all distances. We now compare the performance of these basis choices in a more realistic setting, namely for the close evaluation of Stokes flow around a slender filament, using the classical slender body approximation. We test panel-wise (open arc) and Fourier (global) quadratures.

Our panel-wise test case (Section~\ref{ss:tangle}) was previously considered for singularity swap quadrature (SSQ) in \cite[Section 5.2]{AFKLINTEBERG2021}, to which we refer for full implementation details.
Our Fourier test (Section~\ref{ss:starfish}) is new.
The code used to generate all numerical results in this paper is available at \cite{code}, which builds on the codes provided in \cite{linequad}. All code for this paper is written and run in MATLAB, and is not optimized for runtime. The computer used for the numerical results of this section has a 3.4 GHz quad-core Intel i7-6700 CPU.

\subsection{Setup}\label{ss:Stokes}
The classical slender body approximation models the Stokes velocity field $\uu(\xx)\in\mathbb{R}^3$ at a point $\xx\notin\Gamma$, where $\Gamma$ is the centerline of a slender body with radius $\varrho\ll1$, as
\begin{equation}
\uu(\xx) = \int_\Gamma \left(\mathcal{S}(\xx-\yy)+\frac{\varrho^2}{2}\mathcal{D}(\xx-\yy)\right)\boldsymbol{\sigma}(\yy)\ds(\yy),
\label{eq:slender_layer_potential}
\end{equation}
where $\boldsymbol{\sigma}(\yy)$ is the force density along the filament, $\rr\coloneqq\xx-\yy$, the kernels
\begin{equation}
\mathcal{S}(\rr) = \frac{\mathbf{I}}{|\rr|} + \frac{\rr\rr^T}{|\rr|^3}, \qquad \mathcal{D}(\rr) = \frac{\mathbf{I}}{|\rr|^3} - 3\frac{\rr\rr^T}{|\rr|^5},
\label{eq:Stokes}
\end{equation}
are the so-called Stokeslet and doublet, respectively.

To apply the SSQ method, we split the integrand according to its singular behavior
\begin{equation}
\uu(\xx) = I_1+I_3+I_5,
\end{equation}
where
\begin{align}
    I_1 &= \int_\Gamma \frac{\boldsymbol{\sigma}(\yy)}{|\rr|}\ds(\yy),\label{eq:I1} \\
    I_3 &= \int_\Gamma \frac{\left(\rr\rr^T+\varrho^2\mathbf{I}/2\right)\boldsymbol{\sigma}(\yy)}{|\rr|^3}\ds(\yy),\label{eq:I3} \\
    I_5 &= -\frac{3\varrho^2}{2}\int_\Gamma \frac{\rr\rr^T\boldsymbol{\sigma}(\yy)}{|\rr|^5}\ds(\yy).\label{eq:I5}
\end{align}
For a target point located a distance $d$ from $\Gamma$, the standard SSQ method computes the quadrature weights for each term $I_1$, $I_3$, and $I_5$ using the standard monomial or Fourier basis through the adjoint formulation described in Section \ref{sss:standard_adjoint}. We refer to this baseline approach simply as SSQ.

The strategy referred to as translated SSQ (TSSQ) replaces the standard basis with the modified (translated) one according to the criteria outlined in Section~\ref{ss:when_to_use_mod_basis} and employs the stabilized adjoint formulation described in Section \ref{sss:stablized_adjoint}. In practice, this is done only for the $I_3$ and $I_5$ terms, since numerical tests indicate that the $1/|\rr|$ ($m=1$) singularity does not incur significant cancellation error.

In all numerical experiments, following \cite{AFKLINTEBERG2021}, we set $\boldsymbol{\sigma}(\yy)=\yy$ and $\varrho=10^{-3}$. The error in evaluating $\uu(\xx)$ is computed as the maximum componentwise absolute error between the numerical and reference solution, normalized by the maximum norm of the reference solution. This quantity is referred to as the \textit{relative error}. 
The reference solution is computed using the same adaptive quadrature approach as in \cite{AFKLINTEBERG2021}. 
For a given target point $\xx$, panels are recursively subdivided until each panel $\Gamma_i$ satisfies $\min_{\yy\in\Gamma_i}|\xx-\yy|~\geq~H\eta_i$, where $H$ is a refinement parameter and $\eta_i$ denotes the arc length of $\Gamma_i$.
Note that with $\varrho=10^{-3}$, targets with $d<10^{-3}$ lie inside the fiber and are not physically relevant. We nevertheless include such cases in our tests, since they provide a challenging stress test of the quadrature and remain relevant for the Stokeslet contribution.

\subsection{Long thin filament}\label{ss:tangle}
As a first example we consider the tangled long thin filament $\Gamma$ parametrized by $\boldsymbol{\gamma}:[0,1]\rightarrow\mathbb{R}^3$, also used\footnote{See \cite[Section 5.2]{AFKLINTEBERG2021} for the expression of $\boldsymbol{\gamma}$ and how the filament is generated.}
in \cite[Section 5.2]{AFKLINTEBERG2021}, and illustrated in Figure \ref{fig:long_filament}.
The curve is discretized adaptively using composite Gauss--Legendre quadrature with $n=16$ points per panel. The discretization is refined until the Legendre expansion coefficients $\hat{\mathbf{s}}=\{\hat{s}_k\}_{k=1}^n$ of the speed function $s(t)=|\boldsymbol{\gamma}'(t)|$ satisfy 
\begin{equation}
    \max(|\hat{s}_{n-1}|,|\hat{s}_n|)<\epsilon\|\hat{\mathbf{s}}\|_\infty,
    \label{eq:resolution_critera}
\end{equation}
for a given tolerance $\epsilon$. 

Errors are measured relative to a reference solution computed using the adaptive quadrature strategy, but with 18-point Gauss--Legendre panels, $H=1$, and $\epsilon=5\times 10^{-14}$. This choice ensures that the reference discretization is both different from and more finely resolved than the grids used for SSQ and TSSQ.

Both SSQ and TSSQ use upsampling to $32$ Gauss--Legendre points for all targets whose complex root $t_0=a+ib$ lies within the Bernstein ellipse of radius 3 (ensuring full accuracy). Outside this region, standard Gauss--Legendre quadrature is used. In TSSQ, the translated monomial basis is activated when $|b|\leq 10^{-2}$.

For each distance $d$,
we evaluate the integral \eqref{eq:slender_layer_potential} at 1000 targets randomly placed to be exactly $d$ from the filament $\Gamma$, and report the minimum, maximum, and mean relative error achieved by standard SSQ and TSSQ. 
Figure \ref{fig:long_filament_err_vs_dist_tol4} presents the results for $\epsilon=10^{-4}$, where the filament is discretized using 59 panels (944 nodes), and Figure \ref{fig:long_filament_err_vs_dist_tol6} for $\epsilon=10^{-6}$, where 91 panels (1456 nodes) are used.

Standard SSQ exhibits a growing relative error of order $\mathcal{O}(1/d^2)$ regardless of the tolerance, indicating a nonconvergent unbounded trend as $d\rightarrow0$, while TSSQ achieves errors near or below $\epsilon$, largely independent of $d$. In particular, for $\epsilon=10^{-6}$, the maximum TSSQ error is $10^{-7}$ over all distances $d>10^{-7}$, while standard SSQ is up to 9 digits worse than TSSQ over this distance range.

To assess performance, we also measured the rate of computing target-specific quadrature weights via the adjoint method of Section~\ref{sss:stablized_adjoint}. Accumulating timings over both tests, the methods achieved approximately $4.3\times 10^4$ and $4.3\times 10^4$ weight sets per second for SSQ and TSSQ, respectively. The additional cost for TSSQ is therefore negligible in the panel case.

\begin{figure}[t!]
\centering
\begin{subfigure}[t]{0.5\textwidth}
\includegraphics[width=\linewidth]{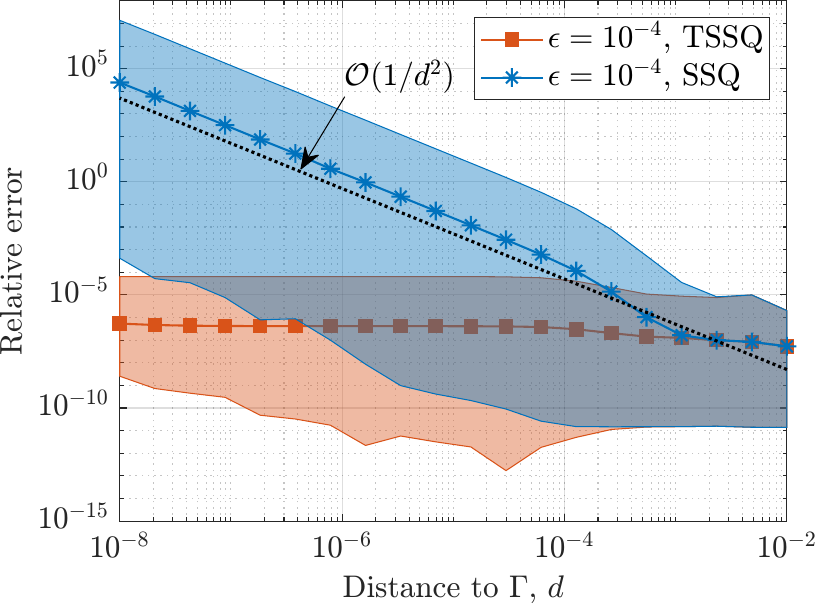}
\caption{}
\label{fig:long_filament_err_vs_dist_tol4}
\end{subfigure}%
\begin{subfigure}[t]{0.5\textwidth}
\includegraphics[width=\linewidth]{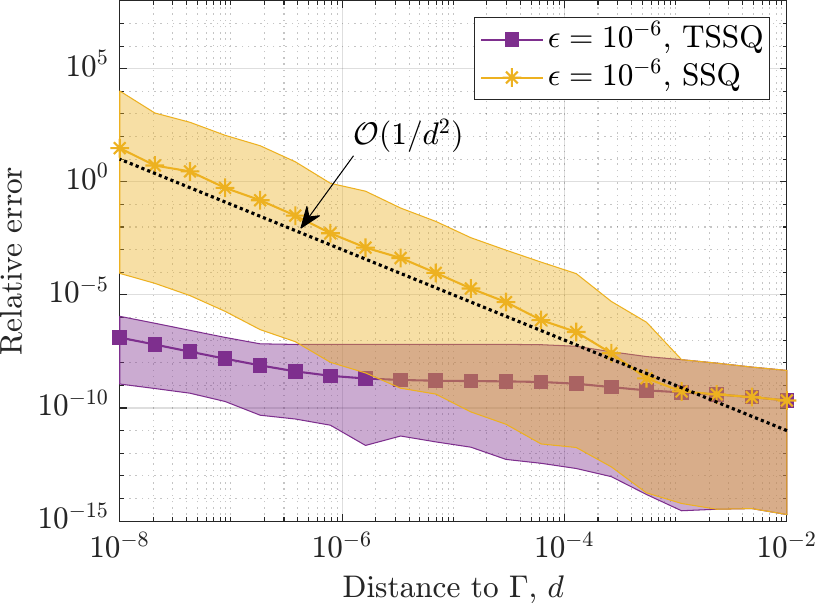}
\caption{}
\label{fig:long_filament_err_vs_dist_tol6}
\end{subfigure}%
\hfill
\begin{subfigure}[t]{0.42\textwidth}
\includegraphics[trim={1.7cm 0.3cm 2.7cm 0.8cm},clip,width=\linewidth]{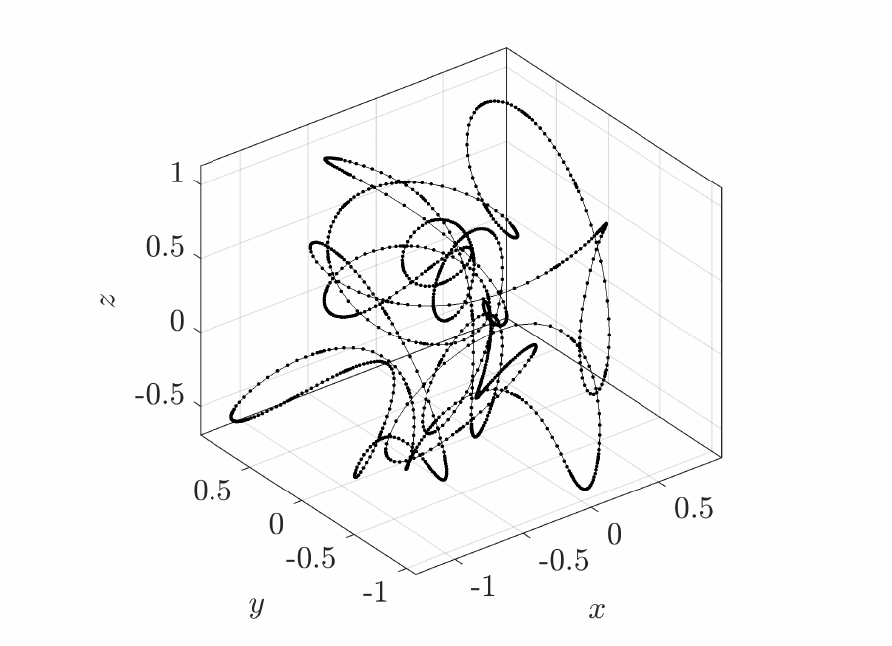}
\caption{}
\label{fig:long_filament}
\end{subfigure}
\caption{Panels (a) and (b) compare the minimum, maximum, and mean relative error of the standard SSQ and translated SSQ (TSSQ) methods in evaluating the flow field \eqref{eq:slender_layer_potential} for different tolerances $\epsilon$ at 1000 random target points sampled at each distance $d$ from the filament $\Gamma$ shown in panel (c), with dots illustrating the discretization nodes (when $\epsilon=10^{-6}$).}
\label{fig:tangle}
\end{figure}

\subsection{Deformed thin starfish}\label{ss:starfish}
We now investigate the performance of the standard (Fourier) basis SSQ and corresponding modified SSQ method (which we also refer to as TSSQ) for close evaluation of the line potential in \eqref{eq:slender_layer_potential}, where the curve $\boldsymbol{\gamma}:[0,2\pi)\rightarrow\mathbb{R}^3$ is the closed curve given by
\begin{equation}
    \boldsymbol{\gamma}(t) = \Big((1+0.3\cos(5t))\cos(t),~(1+0.3\cos(5t))\sin(t),~2\sin(t)\Big),\quad t\in[0,2\pi).
\end{equation}
This geometry, which we refer to as the \textit{deformed thin starfish}, is shown in Figure \ref{fig:deformed_starfish_curve}. 

The curve is discretized using $n=512$ nodes periodically equispaced in $t$, chosen so that the resolution criterion in \eqref{eq:resolution_critera} holds with $\epsilon=5\times 10^{-14}$, where the coefficients $\{\hat{s}_k\}$ now denote the Fourier coefficients of the speed function $s(t)=|\boldsymbol{\gamma}'(t)|$. For each fixed distance $d$ from the curve, we randomly generate 5000 target points and evaluate the potential at these locations, reporting the minimum, maximum, and mean relative error over the 5000 targets. 
Errors are measured relative to a reference solution computed using the same adaptive quadrature strategy as before, but with 34-point Gauss--Legendre panels, $H=4$, and $\epsilon=5\times 10^{-14}$.

Figure \ref{fig:deformed_starfish_err_vs_dist} shows the results. Standard SSQ exhibits an error that grows like $\mathcal{O}(1/d^2)$, consistent with the panel-based discretization in Figure \ref{fig:tangle}. To reach the target tolerance $\epsilon=10^{-10}$, it is necessary to correct both the $1/|\rr|^3$ and $1/|\rr|^5$ terms of the line potential in \eqref{eq:I3} and \eqref{eq:I5}, respectively. With these corrections, TSSQ achieves the desired mean accuracy of 10 digits for all distances at which the reference solution is sufficiently precise to measure it ($d>3\times 10^{-6}$). The maximum errors are at most one digit worse. In contrast, SSQ is up to 7 digits less accurate over the same distance range. 
Note that changing the number of discretization points $n$ would not fundamentally change this result, provided that $n$ is sufficiently large to resolve the geometry and density. It would, however, shift the distance range in which special quadrature (SSQ or TSSQ) must be activated to achieve the target tolerance.

In terms of efficiency, the methods compute approximately $9.5\times 10^3$ and $5.0\times 10^3$ sets of target-specific quadrature weights per second for SSQ and TSSQ, respectively. The halved throughput for TSSQ mainly stems from the extra recurrence in \eqref{eq:lambdahat}, together with the large $n=512$ and
the fact that all $10^5$ target points require the modified basis.
It is likely that an HPC implementation in a lower-level language could reduce this throughput gap.

\begin{figure}[t!]
\centering
\begin{subfigure}[t]{0.5225\textwidth}
\includegraphics[width=\linewidth]{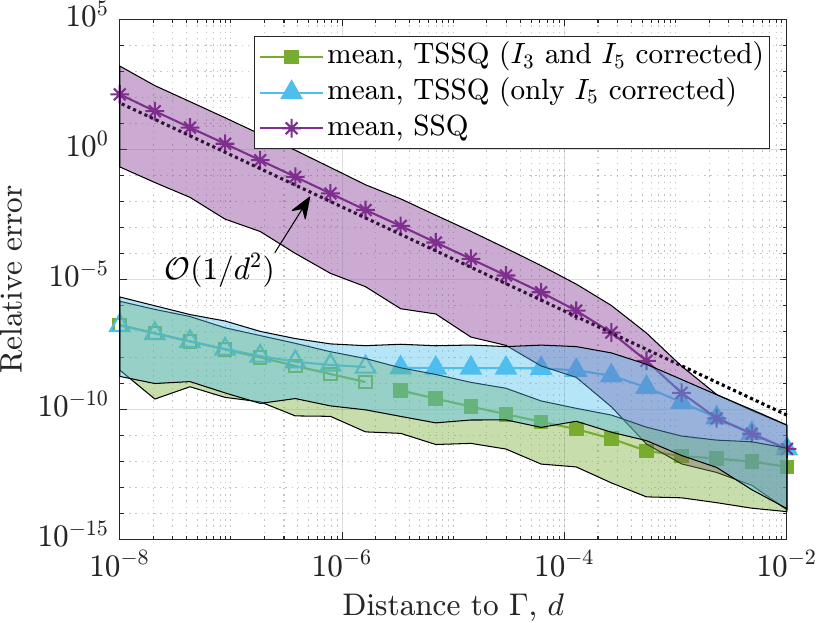}
\caption{}
\label{fig:deformed_starfish_err_vs_dist}
\end{subfigure}%
\begin{subfigure}[t]{0.32\textwidth}
\includegraphics[trim={2.5cm 0.25cm 4.3cm 0.8cm},clip,width=\linewidth]{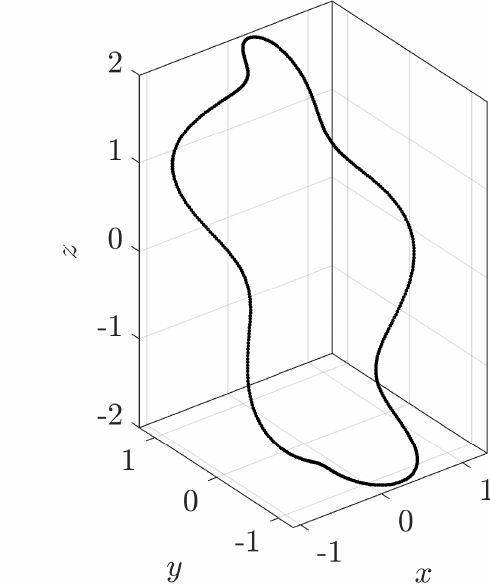}
\caption{}
\label{fig:deformed_starfish_curve}
\end{subfigure}
\caption{Panel (a) compares the minimum, maximum, and mean relative error of the standard SSQ and translated (TSSQ) methods in evaluating \eqref{eq:slender_layer_potential} at 5000 random points sampled at each distance $d$ from the globally discretized closed curve $\Gamma$ of panel (b). 
Filled markers indicate that the reference solution is accurate to within $10^{-10}$, and unfilled markers denote distances ($d<3\times 10^{-6}$) where it is not.}
\label{fig:deformed_starfish}
\end{figure}

\section{Conclusions}\label{s:conclusion}
We have introduced an enhanced version of \textit{singularity swap quadrature} (SSQ) \cite{AFKLINTEBERG2021,afKlinteberg2024,bao2024,krantz2024} for accurate evaluation of potentials near smooth curves in three dimensions. The method retains the structure of SSQ, but with the crucial difference that the choice of interpolation basis depends on the evaluation target point.

The standard bases---monomials for open curves and Fourier modes for closed ones---perform well when the kernel exhibits a mild singularity and does not contain near-vanishing factors. However, when the numerator nearly vanishes at the evaluation point, as it does if it originates from a derivative or tensorial Green's kernel, 
the standard bases lead to quadrature sums with oscillatory basis contributions
that are much larger than the true value of the integral.
This results in \textit{catastrophic cancellation} and severe loss of precision, with errors in our tests growing as the inverse square of the minimum distance between the target point and the curve.

Our remedy is simple and effective, with the central idea being that when the kernel vanishes, all but one of the interpolation bases should too. For open curves, we translate the monomials to vanish at the point of near-vanishing in parameter space. For closed curves, we replace the standard Fourier basis with a modified one tailored to vanish at the same point.
A key ingredient is the stable evaluation of the coefficient of the now-dominant constant term.
We refer to the approach as \textit{translated SSQ} (TSSQ).

The improvements are prominent, as we show through several numerical experiments, including applications from slender body theory where accurate close evaluations are often required.
For target distances as small as $10^{-8}$, TSSQ achieves errors up to ten orders of magnitude smaller than standard SSQ at the same computational cost.
Yet in the more physically relevant range $10^{-5}<d<10^{-3}$, TSSQ still gives a 2--5 digit improvement over SSQ.
We also derived an adjoint formula for stable target-specific quadrature weights that allows TSSQ to act on samples of arbitrary new smooth densities.

TSSQ is likely to find utility in line-integral potential solvers for PDEs with wire or fiber geometries (especially when robustness or high accuracy is needed), 
but also as a subroutine within quadrature schemes for solving boundary integral equations on general surfaces in three dimensions.
TSSQ runs as fast as SSQ while remaining stable and accurate where SSQ fails, and it can be applied wherever SSQ is used, making it a safe and reliable choice.

\section*{Acknowledgments}
Krantz and Tornberg acknowledge support from the Swedish Research Council under grant 2023-04269. The authors thank Ludvig af Klinteberg (Mälardalen University) for making the code from the original SSQ paper publicly available, which included an implementation of the recurrence formulas in Lemma \ref{lem:mod_monomial_rec}. Krantz also gratefully acknowledges the hospitality of the Center for Computational Mathematics at the Flatiron Institute, particularly Manas Rachh, and thanks the Simons Foundation for its support during this visit.
It was during this visit that the authors had the initial ideas for this work.
The Flatiron Institute is a division of the Simons Foundation.
Part of this work was carried out during a program at Institut Mittag-Leffler in Djursholm, Sweden, during the fall semester of 2025, with support from the Swedish Research Council under grant 2021-06594.

\appendix

\section{Recurrence formulas for \boldmath{$\mu_{k}^{m}(\alpha)$}}\label{app:rec}Here we provide the necessary recurrence formulas to compute the modified Fourier basis integrals needed in Lemma \ref{lem:mod_fourier_rec}. The following formulas were derived in \cite[Lemma 2.3]{krantz2024}.

Define the complete elliptic integrals of the first and second kind as
\begin{equation}
    K(\alpha^2) = \int_0^{\pi/2} \frac{\textrm{d}\theta}{\sqrt{1-\alpha^2\sin^2(\theta)}},\qquad E(\alpha^2) = \int_0^{\pi/2}\sqrt{1-\alpha^2\sin^2(\theta)}~\textrm{d}\theta,
    \label{eq:ellipticKE}
\end{equation}
respectively. Then $\mu_k^m(\alpha)$ defined in \eqref{eq:mukm} can be expressed as
\begin{equation}
    \mu_k^m(\alpha) =
    \begin{cases}
        \dfrac{1+\alpha^2}{\alpha}\dfrac{2(k-1)}{2k-1}\mu_{k-1}^m(\alpha) - \dfrac{2k-3}{2k-1}\mu_{k-2}^m(\alpha), & m=1\text{ and } k=2,3,\dots,\\
        \dfrac{1+\alpha^2}{2\alpha}\mu_{k-1}^{m}(\alpha) - \dfrac{(1-\alpha)^2}{2\alpha}\dfrac{m/2+k-2}{m/2-1}\mu_{k-1}^{m-1}(\alpha), & m>1\text{ and } k=1,2,\dots.
    \end{cases}
    \label{eq:mu_rec}
\end{equation}
The initial values for $\mu_k^m(\alpha)$ for $m=1,3,5$ are
\begin{align}
\mu_0^{1}(\alpha) &= 2K(\alpha^2), \quad \mu_1^{1}(\alpha) = \frac{2}{\alpha}\left(K(\alpha^2)-E(\alpha^2)\right),\label{eq:mu0p1}\\
\mu_0^{3}(\alpha) &= \frac{2}{1+\alpha}\left(\frac{2}{1+\alpha}E(\alpha^2) - (1-\alpha)K(\alpha^2)\right),\label{eq:mu0p3}\\
\mu_0^{5}(\alpha) &= \frac{2}{3(1+\alpha)^4}\left(8(1+\alpha^2)E(\alpha^2)-(1-\alpha)(1+\alpha)(5+3\alpha^2)K(\alpha^2)\right).\label{eq:mu0p5}
\end{align}

\clearpage
\bibliographystyle{siamplain}
\bibliography{references}

\begin{thebibliography}{10}

\bibitem{linequad}
{\sc L.~af~Klinteberg}, {\em ludvigak/linequad: v1.0.0}.
\newblock GitHub, 2019, \url{https://github.com/ludvigak/linequad}.

\bibitem{afKlinteberg2024}
{\sc L.~af~Klinteberg}, {\em Singularity swap quadrature for nearly singular line integrals on closed curves in two dimensions}, BIT Numerical Mathematics, 64 (2024), p.~11, \url{https://doi.org/10.1007/s10543-024-01013-0}.

\bibitem{AFKLINTEBERG2021}
{\sc L.~af~Klinteberg and A.~H. Barnett}, {\em Accurate quadrature of nearly singular line integrals in two and three dimensions by singularity swapping}, BIT Numerical Mathematics, 61 (2021), pp.~83--118, \url{https://doi.org/10.1007/s10543-020-00820-5}.

\bibitem{alyones11}
{\sc S.~Alyones and C.~W. Bruce}, {\em Curved fiber scattering}, Progress In Electromagnetics Research M, 17 (2011), pp.~225--236, \url{https://doi.org/10.2528/PIERM11011005}.

\bibitem{bao2024}
{\sc G.~Bao, W.~Hua, J.~Lai, and J.~Zhang}, {\em Singularity swapping method for nearly singular integrals based on trapezoidal rule}, SIAM Journal on Numerical Analysis, 62 (2024), pp.~974--997, \url{https://doi.org/10.1137/23M1571666}.

\bibitem{barnett2014}
{\sc A.~H. Barnett}, {\em Evaluation of layer potentials close to the boundary for {Laplace} and {Helmholtz} problems on analytic planar domains}, SIAM Journal on Scientific Computing, 36 (2014), pp.~A427--A451, \url{https://doi.org/10.1137/120900253}.

\bibitem{barycentric}
{\sc J.-P. Berrut and L.~N. Trefethen}, {\em Barycentric {Lagrange} interpolation}, SIAM Review, 46 (2004), pp.~501--517, \url{https://doi.org/10.1137/S0036144502417715}.

\bibitem{bjorck}
{\sc {\AA}.~Bj\"{o}rck and V.~Pereyra}, {\em Solution of {Vandermonde} systems of equations}, Mathematics of Computation, 24 (1970), pp.~893--903, \url{https://doi.org/10.2307/2004623}.

\bibitem{haslamwire}
{\sc O.~P. Bruno and M.~C. Haslam}, {\em {Regularity Theory and Superalgebraic Solvers for Wire Antenna Problems}}, SIAM Journal on Scientific Computing, 29 (2007), pp.~1375--1402, \url{https://doi.org/10.1137/050648262}.

\bibitem{cortez}
{\sc R.~Cortez}, {\em {The Method of Regularized Stokeslets}}, SIAM Journal on Scientific Computing, 23 (2001), pp.~1204--1225, \url{https://doi.org/10.1137/S106482750038146X}.

\bibitem{gotz2000}
{\sc T.~Götz}, {\em Interactions of Fibers and Flow: Asymptotics, Theory and Numerics}, PhD thesis, University of Kaiserslautern, Germany, 2000.

\bibitem{HELSING2008}
{\sc J.~Helsing and R.~Ojala}, {\em On the evaluation of layer potentials close to their sources}, Journal of Computational Physics, 227 (2008), pp.~2899--2921, \url{https://doi.org/10.1016/j.jcp.2007.11.024}.

\bibitem{jiang2024}
{\sc S.~Jiang and H.~Zhu}, {\em Recursive reduction quadrature for the evaluation of {L}aplace layer potentials in three dimensions}, 2024, \url{https://arxiv.org/abs/2411.08342}.

\bibitem{Johnson1980}
{\sc R.~E. Johnson}, {\em An improved slender-body theory for {S}tokes flow}, Journal of Fluid Mechanics, 99 (1980), p.~411–431, \url{https://doi.org/10.1017/S0022112080000687}.

\bibitem{KellerRubinow1976}
{\sc J.~B. Keller and S.~I. Rubinow}, {\em Slender-body theory for slow viscous flow}, Journal of Fluid Mechanics, 75 (1976), p.~705–714, \url{https://doi.org/10.1017/S0022112076000475}.

\bibitem{KLOCKNER2013332}
{\sc A.~Kl\"{o}ckner, A.~Barnett, L.~Greengard, and M.~O'Neil}, {\em Quadrature by expansion: A new method for the evaluation of layer potentials}, Journal of Computational Physics, 252 (2013), pp.~332--349, \url{https://doi.org/10.1016/j.jcp.2013.06.027}.

\bibitem{code}
{\sc D.~Krantz}, {\em {davidkrantz/tssq-matlab: v1.1.0}}.
\newblock Zenodo, 2026, \url{https://doi.org/10.5281/zenodo.18983545}.

\bibitem{krantz2024}
{\sc D.~Krantz and A.-K. Tornberg}, {\em Error estimate based adaptive quadrature for layer potentials over axisymmetric surfaces}, 2024, \url{https://arxiv.org/abs/2412.19575v1}.

\bibitem{Saintillan2016}
{\sc H.~Manikantan and D.~Saintillan}, {\em Effect of flexibility on the growth of concentration fluctuations in a suspension of sedimenting fibers: Particle simulations}, Physics of Fluids, 28 (2016), p.~013303, \url{https://doi.org/10.1063/1.4938493}.

\bibitem{nitsche_evaluation_2021}
{\sc M.~Nitsche}, {\em Evaluation of near-singular integrals with application to vortex sheet flow}, Theoretical and Computational Fluid Dynamics, 35 (2021), pp.~581--608, \url{https://doi.org/10.1007/s00162-021-00577-9}.

\bibitem{nitsche_corrected_2022}
{\sc M.~Nitsche}, {\em Corrected trapezoidal rule for near-singular integrals in axi-symmetric {Stokes} flow}, Advances in Computational Mathematics, 48 (2022), p.~57, \url{https://doi.org/10.1007/s10444-022-09973-z}.

\bibitem{nitsche2025correctedtrapezoidalrulesnearsingular}
{\sc M.~Nitsche, B.~Wu, and L.~Xu}, {\em {Corrected Trapezoidal Rules for Near-Singular Surface Integrals Applied to 3D Interfacial Stokes Flow}}, 2025, \url{https://arxiv.org/abs/2504.01144}.

\bibitem{Ojala2015}
{\sc R.~Ojala and A.-K. Tornberg}, {\em An accurate integral equation method for simulating multi-phase stokes flow}, Journal of Computational Physics, 298 (2015), pp.~145--160, \url{https://doi.org/10.1016/j.jcp.2015.06.002}.

\bibitem{polanco25}
{\sc J.~I. Polanco}, {\em {Fast and Accurate Evaluation of Biot--Savart Integrals over Spatial Curves in Periodic Domains}}, SIAM Journal on Scientific Computing, 47 (2025), pp.~B822--B845, \url{https://doi.org/10.1137/24M1684682}.

\bibitem{DIMlap19}
{\sc C.~Pérez-Arancibia, L.~M. Faria, and C.~Turc}, {\em Harmonic density interpolation methods for high-order evaluation of {L}aplace layer potentials in {2D} and {3D}}, Journal of Computational Physics, 376 (2019), pp.~411--434, \url{https://doi.org/10.1016/j.jcp.2018.10.002}.

\bibitem{schilling23}
{\sc J.~Schilling, J.~Svensson, U.~Höfel, J.~Geiger, and H.~Thomsen}, {\em {B}iot--{S}avart routines with minimal floating point error}, Computer Physics Communications, 287 (2023), p.~108692, \url{https://doi.org/10.1016/j.cpc.2023.108692}.

\bibitem{ueda}
{\sc M.~J. Shelley and T.~Ueda}, {\em The {S}tokesian hydrodynamics of flexing, stretching filaments}, Physica D, 146 (2000), pp.~221--245, \url{https://doi.org/10.1016/S0167-2789(00)00131-7}.

\bibitem{STEIN2024102379}
{\sc D.~B. Stein and M.~J. Shelley}, {\em Computational tools for cellular scale biophysics}, Current Opinion in Cell Biology, 89 (2024), p.~102379, \url{https://doi.org/10.1016/j.ceb.2024.102379}.

\bibitem{TORNBERG2006172}
{\sc A.-K. Tornberg and K.~Gustavsson}, {\em A numerical method for simulations of rigid fiber suspensions}, Journal of Computational Physics, 215 (2006), pp.~172--196, \url{https://doi.org/10.1016/j.jcp.2005.10.028}.

\bibitem{TORNBERG2004}
{\sc A.-K. Tornberg and M.~J. Shelley}, {\em Simulating the dynamics and interactions of flexible fibers in {S}tokes flows}, Journal of Computational Physics, 196 (2004), pp.~8--40, \url{https://doi.org/10.1016/j.jcp.2003.10.017}.

\bibitem{van_loan}
{\sc C.~F. Van~Loan}, {\em Introduction to scientific computing: a matrix-vector approach using MATLAB}, Prentice-Hall, Inc., USA, 1996.

\end{thebibliography}

\end{document}